 \newtheorem{thm}{Theorem}[section]
 \newtheorem{cor}[thm]{Corollary}
 \newtheorem{lem}[thm]{Lemma}
 \newtheorem{prop}[thm]{Proposition}
 \theoremstyle{definition}
 \theoremstyle{remark}
 \newtheorem{rem}[thm]{Remark}
 \newtheorem{ex}[thm]{Example}
 \numberwithin{equation}{section}
\begin{document}

\title[ Common local spectral properties]
{Further common local spectral properties for bounded linear operators
}

\author[]{Hassane Zguitti}

\address{Hassane Zguitti: 
Department of Mathematics, Dhar El Mahraz Faculty of Science, Sidi Mohamed Ben Abdellah University, BO 1796 Fes-Atlas, 30003 Fez Morocco.
}

\email{hassane.zguitti@usmba.ac.ma}
\subjclass{47A10, 47A11, 47A53, 47A55.}
\keywords{Jacobson's lemma, common properties, local spectral theory}

\begin{abstract} In this note, we study common local spectral properties for bounded linear operators $A\in\mathcal{L}(X,Y)$ and $B,C\in\mathcal{L}(Y,X)$ such that $$A(BA)^2=ABACA=ACABA=(AC)^2A.$$ We prove that $AC$ and $BA$ share the single valued extension property, the Bishop property $(\beta)$, the property $(\beta_{\epsilon})$, the decomposition property $(\delta)$ and decomposability. Closedness of analytic core and quasinilpotent part are also investigated. Some applications to Fredholm operators are given.

\end{abstract}

\maketitle

\section{Introduction}
For any Banach spaces $X$ and $Y$, let $\mathcal{L}(X,Y)$ denote the set of all bounded linear operators from $X$ to $Y$; with $\mathcal{L}(X)=\mathcal{L}(X,X)$. For $T\in\mathcal{L}(X)$, let $\mathcal{N}(T)$, $\mathcal{R}(T)$, $\sigma(T)$, $\sigma_p(T)$, $\sigma_a(T)$ and $\sigma_s(T)$ denote the null space, the range, the spectrum, the point spectrum, the approximate point spectrum and the surjective spectrum of $T$, respectively.  An operator $T\in\mathcal{L}(X)$ is said to be an {\it upper semi-Fredholm} operator if
 ${\mathcal R}(T)$ is closed and $\dim \mathcal{N}(T)<\infty$, and $T$ is said to be a {\it lower semi-Fredholm} operator if $\mbox{codim}(T)<\infty$. $T$ is said to be {\it Fredholm} operator if $\dim \mathcal{N}(T)<\infty$ and $\mbox{codim}(T)<\infty$. The {\it upper semi-Fredholm spectrum} $\sigma_{uf}(T)$, {\it lower semi-Fredholm spectrum} $\sigma_{lf}(T)$ and  the {\it essential spectrum} $\sigma_e(T)$ are defined by
 $$\sigma_{uf}(T)=\{\lambda\in\mathbb{C}\,:\,T-\lambda\mbox{ is not upper semi-Fredholm}\};$$
 $$\sigma_{lf}(T)=\{\lambda\in\mathbb{C}\,:\,T-\lambda\mbox{ is not lower semi-Fredholm}\};$$
 $$\sigma_{e}(T)=\{\lambda\in\mathbb{C}\,:\,T-\lambda\mbox{ is not Fredholm}\}.$$
For an arbitrary $T\in\mathcal{L}(X)$, the {\it local resolvent set} $\rho_T(x)$ of $T$ at a vector $x$ in $X$ is defined to consist of all $\lambda\in\mathbb{C}$ for which there exists an analytic $X$-valued function $f$ on an open neighborhood $U$ of $\lambda$ such that $$(T-\mu)f(\mu)=x,\,\,\mbox{ for all }\mu\in U.$$ The {\it local spectrum} $\sigma_T(x)$ is defined by $\sigma_T(x)=\mathbb{C}\setminus\rho_T(x)$. The local spectrum $\sigma_T(x)$ is a subset of $\sigma(T)$ and it may happen to be empty. Moreover, we have (see \cite{LN}) $$\sigma_s(T)={\displaystyle\bigcup_{x\in X}}\sigma_T(x).$$
 
 For  $T\in\mathcal{L}(X)$ and $F\subseteq \mathbb{C}$, let $X_T(F)$ denote the {\it local spectral subspace} defined by $$X_T(F)=\{x\in X\,:\,\sigma_T(x)\subseteq F\}.$$ Clearly, $X_T(F)$ is a linear (not necessarily closed) subspace of $X$. The operator $T$ is said to possess the {\it Dunford's property} $(\mathcal{C})$ if $X_T(F)$ is closed for every closed subset $F$ of $\mathbb{C}$.
 \smallskip
 
 The
 operator $T\in{\mathcal L}(X)$ is said to have the {\it single valued
 extension property} (SVEP, for short) at $\lambda\in\mathbb{C}$ provided that there
 exists an open disc $V$ centered at $\lambda$  such that for every open subset $U \subset V$, the constant function $f\equiv 0$ is the only
 analytic solution of the equation
 $$(T-\mu)f(\mu)=0\quad\forall\mu\in U.$$ We use ${\mathcal S}(T)$ to denote the  set where $T$ fails to have the SVEP  and we say that $T$ has the SVEP if
$ {\mathcal S}(T)$ is the empty set, \cite{Fi, LN}. In the case where $T$ has the SVEP, $\sigma_T(x)=\emptyset$ if and only if $x=0$. Moreover (\cite[Lemma 3]{LV}), $$\sigma(T)=\sigma_s(T)\cup\mathcal{S}(T).$$

 \indent For  an open set $U$ of $\mathbb{C}$, let $\mathcal{O}(U,X)$ be the Fr\'echet space of all $X$-valued analytic function on $U$ endowed with the topology defined by uniform convergence on every compact subset of $U$.  An operator $T\in\mathcal{L}(X)$ is said to satisfy the {\it Bishop's
 property ($\beta$) on an open set} $U\subseteq {C}$ provided that for every open subset $V$ of $U$ and for any
 sequence $(f_n)_n$ of analytic $X$-valued functions on $V$, 
 $$(T-\mu)f_n(\mu)\longrightarrow 0\mbox{ in } \mathcal{O}(V,X)\Longrightarrow f_n(\mu)\longrightarrow 0\mbox{ in } \mathcal{O}(V,X).$$ 
Let $\rho_\beta(T)$ be the largest open set on which $T$ has the property $(\beta)$. Its complement $\sigma_\beta(T)=\mathbb{C}\setminus\rho_\beta(T)$ is a closed, possibly empty, subset of $\sigma(T)$. Then   $T$ is said to satisfy the 
 Bishop's property ($\beta$), precisely when $\sigma_\beta(T) = \emptyset$, 
\cite{AE, MMN}.
 \smallskip
 
 It is well known that the following implications hold
 $$\mbox{Bishop's property }(\beta)\Rightarrow\mbox{ Dunford's property }(\mathcal{C})\Rightarrow\mbox{ SVEP}.$$
 
\indent In order to introduce the dual notion of Bishop's property $(\beta)$, we need a slight variant of the local spectral subspace. For a closed subset $F$ in $\mathbb{C}$, the  {\it glocal spectral analytic space} $\mathcal{X}_T(F)$ is the linear subspace of vectors $x\in X$ for which there exists an analytic function $f\,:\,\mathbb{C}\setminus F\rightarrow X$ such that $$(T-\mu)f(\mu)=x,\,\,\mbox{ for all }\mu\in \mathbb{C}\setminus F.$$
We point out that the analytic function $f$ is defined globally on the entire complement of $F$. Evidently, $ \mathcal{X}_T(F)$ is linear subspace contained in $X_T(F)$. Moreover, the equality $\mathcal{X}_T(F)=X_T(F)$ holds for all closed sets $F\subseteq \mathbb{C}$ precisely when $T$ has the SVEP \cite[Proposition 3.3.2]{LN}.
 \medskip
 
 \indent An operator $T\in\mathcal{L}(X)$ is said to have the {\it decomposition
 property $(\delta)$ on } $U$ provided that for all open sets $V,W\subseteq \mathbb{C}$ for which $\mathbb{C}\setminus U\subseteq V\subseteq\overline{V}\subseteq W$, we have
 \begin{equation}{\mathcal X}_T(\mathbb{C}\setminus V)+{\mathcal X}_T(\overline{ W})=X.
 \end{equation}
 Let $\rho_\delta(T)$ be the  largest open set on which the operator $T$ has the property $(\delta) $. Its complement $\sigma_\delta(T)=\mathbb{C}\setminus\rho_\delta(T)$ is a closed, possibly empty, subset of $\sigma(T)$ ( \cite[Corollary 17]{MMN}). Then   $T$  has the decomposition property ($\delta$) if $\sigma_{\delta}(T)=\emptyset$.
 \smallskip
 
 Properties ($\beta$) and ($\delta$) are known to be dual to
 each other in the sense that $T$ has $(\delta)$ on $U$ if and only if
 $T^*$ satisfies
 $(\beta)$ on $U$ \cite{AE, MMN}. Moreover $$ \sigma_\beta(T)=\sigma_\delta(T^*)\mbox{ and }\sigma_\delta(T)=\sigma_\beta(T^*).$$

 \indent The operator $T\in{\mathcal L}(X)$ is said to be {\it
 decomposable on } $U$ provided that for every finite open cover
$\{U_1,\ldots,U_n\}$ of
 $\mathbb{C},$ with $\sigma(T)\setminus U\subseteq U_1$, there exists
 $X_1,\ldots,X_n$ closed  $T$-invariant subspaces of $X$ for which 
 \begin{equation}\label{eq00}\sigma(T|X_i)\subseteq U_i\,\hbox{ for
 }\,i=1,\ldots,n\,\hbox{ and }\,X_1+\cdots+X_n=X.\end{equation}
 Let $\rho_{dec}(T)$ be the largest open set $U\subseteq\mathbb{C}$ on which $T$ is decomposable. Its  complement $\sigma_{dec}(T)=\mathbb{C}\setminus\rho_{dec}(T)$ is a closed, possibly empty, subset of $\sigma(T)$. We say that   $T$  is decomposable  if $\sigma_{dec}(T)=\emptyset$. The class of decomposable operators contains all normal operators and more
generally all spectral operators. Operators with totally disconnected spectrum
are decomposable by the Riesz functional calculus. In particular, compact
and algebraic operators are decomposable.
 
 It is also known that $(\beta)$ characterizes operators with
 decomposable extensions \cite{AE}.
   Property ($\beta $) is hence conserved by restrictions while
 $(\delta)$ is
 transferred to quotient operators. See also \cite{LN} for more
 details. We have $$\sigma_{dec}(T)=\sigma_{\beta}(T)\cup\sigma_{\delta}(T)=\sigma_{\beta}(T)\cup\sigma_{\beta}(T^*)=\sigma_{dec}(T^*).$$ 
 
 Let $\mathcal{E}(U,X)$ be the Fr\'echet space of all $X$-valued $C^\infty$-functions on $U$.  The operator $T$ is said to satisfy the {\it 
 property} $(\beta_\epsilon)$ at $\lambda\in\mathbb{C}$ provided that there exists open disc $U$ centered at $\lambda$  such
 that for every open subset $V\subset U$ and for any
 sequence $(f_n)_n$ of infinitely differentiable $X$-valued functions on $V$, we have
 $$(T-\mu)f_n(\mu)\longrightarrow 0\mbox{ in } \mathcal{E}(U,X)\Longrightarrow f_n(\mu)\longrightarrow 0\mbox{ in } \mathcal{E}(U,X).$$
Let $\sigma_{\beta_{\epsilon}}(T)$ be the set of all points where $T$ fails to satisfy the 
 property ($\beta_\epsilon$). Then   $T$ is said to satisfy the property ($\beta_\epsilon$), precisely when $\sigma_{\beta_{\epsilon}}(T) = \emptyset$. The property $(\beta_\epsilon)$ plays the same role for generalized scalar operators as Bishop's property $(\beta)$ does for decomposable operators: an operator $T$ satisfies $(\beta_\epsilon)$ if and only if $T$ is subscalar, is the sense that it has a generalized scalar extension.  An operator $T$ is said to be {\it generalized scalar} if there exists a continuous homomorphism algebra $\Phi\,:\,\mathcal{E}(\mathbb{C}\rightarrow\mathcal{L}(X)$ with $\Phi(1)=I$ and $\Phi(z)=T$ (see \cite{EP}).
 \smallskip
 
Jacobson's Lemma asserts that if $A\in\mathcal{L}(X,Y)$ and $B\in\mathcal{L}(Y,X)$ then
\begin{equation}\label{Jacob} AB-I\mbox{ is invertible if and only if } BA-I\mbox{ is invertible}.
\end{equation}
For $A\in\mathcal{L}(X,Y)$ and $B\in\mathcal{L}(Y,X)$, numerous mathematicians showed that $AB-I$ (resp. $AB$) and $BA-I$ (resp. $BA$) share many spectral properties, see \cite{Ai2, Ai3, Ba, BZ, CDH, LY, MZ, YF, ZZ1, ZZ2} and the references therein. For the local spectral properties, Benhida and Zerouali \cite{BZ} proved that $AB$ and $BA$ share the SVEP, Bishop property $(\beta)$, the property $(\beta_{\epsilon})$, the decomposition property $(\delta)$ and decomposability.  The Dunford condition $(\mathcal{C})$ was studied by Aiena and Gonzalez in \cite{Ai2,Ai3} for operators $A$ and $B$ such that $ABA=A^2$ and $BAB=B^2$. Then Zeng and Zhong \cite{ZZ2} extented common local spectral properties for $AC$ and $BA$ under the condition $ABA=ACA$. For operators $A$, $B$, $C$ and $D$ satisfying $ACD=DBD$ and $BDA=ACA$, Yan and Fang \cite{YF} investigated local spectral properties for $AC$ and $BD$. Recently, \cite{CS} studied the common properties for $ac$ and $ba$ for elements in a ring satisfying $a(ba)^2=abaca=acaba=(ac)^2a$.
 \smallskip
 
 In this note, we extend results of \cite{Ai2, Ai3, BZ, ZZ2} by studying common local spectral properties for bounded linear operators $A\in\mathcal{L}(X,Y)$ and $B,C\in\mathcal{L}(Y,X)$ such that $$A(BA)^2=ABACA=ACABA=(AC)^2A.$$ We prouve that $AC$ and $BA$ share the single valued extension property, the Bishop property $(\beta)$, the property $(\beta_{\epsilon})$, the decomposition property $(\delta)$ and decomposability. Closedness of analytic core and quasinilpotent part are also investigated. Some applications to Fredholm operators are given.

\section{common local spectral properties}

\begin{prop}\label{prop1} Let $A\in\mathcal{L}(X,Y)$ and $B,C\in\mathcal{L}(Y,X)$ such that $A(BA)^2=ABACA=ACABA=(AC)^2A$ and let $\lambda\in\mathbb{C}$. Then $AC$ has the SVEP at $\lambda$ if and only if $BA$ has the SVEP at $\lambda$.

In particular, $AC$ has the SVEP if and only if $BA$ has the SVEP.
\end{prop}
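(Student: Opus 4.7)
The plan is to prove the two implications separately, with the nontrivial content being the construction of a witness that the local eigenvalue equation $(T-\mu)f(\mu)=0$ is inherited by the companion operator. I would handle the forward direction first. Assume SVEP fails for $BA$ at $\lambda$, so that on some open disk $V\ni\lambda$ there is a nontrivial analytic $f\colon V\to X$ with $(BA-\mu)f(\mu)=0$. My candidate witness for $AC$ is
\[
  g(\mu) := ABA\, f(\mu),
\]
chosen so that the identity $ACABA=(AC)^{2}A=A(BA)^{2}$ can be applied in one stroke.

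Verification of $(AC-\mu)g(\mu)=0$ is then a two-line calculation: $AC\,g(\mu)=ACABA\,f(\mu)=A(BA)^{2}f(\mu)$, and since $BAf(\mu)=\mu f(\mu)$, the right-hand side equals $\mu ABA f(\mu)=\mu g(\mu)$. Nontriviality of $g$ is by contradiction: if $ABAf\equiv 0$ on $V$, substituting $BAf=\mu f$ gives $\mu Af(\mu)\equiv 0$, so $Af\equiv 0$ by continuity at $0$; then $BAf=B\cdot 0=0=\mu f$, forcing $f\equiv 0$, a contradiction.

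For the reverse, I would exploit that the hypothesis is preserved under the transposition $B\leftrightarrow C$, since the set $\{A(BA)^{2},ABACA,ACABA,(AC)^{2}A\}$ is manifestly invariant. Starting from a nontrivial analytic $h\colon V\to Y$ with $(AC-\mu)h(\mu)=0$, left-multiplication by $C$ yields $(CA-\mu)(Ch)(\mu)=0$ with $Ch\not\equiv 0$ (otherwise $\mu h=ACh=0$ forces $h\equiv 0$). Applying the swapped version of the forward argument to $Ch$ gives $(AB-\mu)\bigl(ACA\cdot Ch(\mu)\bigr)=0$; but $ACA\cdot Ch=(AC)^{2}h=\mu^{2}h$, so $\mu^{2}(AB-\mu)h(\mu)=0$. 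By the identity theorem on the connected set $V$, this upgrades to $(AB-\mu)h(\mu)=0$ on all of $V$. Finally, left-multiplying by $B$ produces $(BA-\mu)(Bh)=0$ with $Bh\not\equiv 0$ (otherwise $\mu h=ABh=0$), proving that SVEP fails for $BA$ at $\lambda$.

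The main obstacle is identifying the algebraic shape of $g$. Naive candidates such as $g=Af$ only produce the classical Jacobson identity $(AB-\mu)g=0$, not $(AC-\mu)g=0$; and $g=ACAf$ or $g=(AC+\mu)Af$ lead to equations like $(AC+\mu)g=0$ or $(AC)^{2}g=\mu^{2}g$ rather than the one we want. The correct choice $g=ABAf$ is forced by the observation that the left action of $AC$ on $ABA$ collapses, via the hypothesis, to $(AC)^{2}A$, which in turn reduces by two applications of $BAf=\mu f$ to a scalar multiple of $g$.
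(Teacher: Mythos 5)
Your proof is correct and follows essentially the same route as the paper: the forward direction uses the same witness $g=ABAf$ together with the identity $ACABA=A(BA)^2$ and the same peeling-back argument ($ABAf\equiv0\Rightarrow Af\equiv0\Rightarrow f\equiv0$), and your reverse direction traverses the same chain $BA\to AB\to CA\to AC$ that the paper obtains by combining the $B\leftrightarrow C$ symmetry with the Benhida--Zerouali transfer between $ST$ and $TS$, which you simply carry out explicitly (and contrapositively) instead of by citation. No gaps; the only cosmetic difference is that you collapse the intermediate witness $ACACh$ to $\mu^{2}h$ and divide, rather than keeping it as a separate nontrivial solution.
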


\begin{proof} Assume that $AC$ has the SVEP at $\lambda$ and let $f$ be an $X$-valued analytic function in a neighborhood $U$ of $\lambda$ such that
 \begin{equation}\label{eqsvep1}
(BA-\mu)f(\mu)=0,\,\forall \mu\in U.
\end{equation}
By taking $ABA$ values in equality (\ref{eqsvep1}) and using equality $A(BA)^2=ACABA$ , we obtain $(AC-\mu)ABAf(\mu)=0,\,\forall \mu\in U$. Since $ABAf(\mu)$ is analytic on $U$ and $AC$ has the SVEP at $\lambda$, then $ABAf(\mu)=0,\,\forall \mu\in U$. By taking $A$ values in equality (\ref{eqsvep1}), we get $\mu Af(\mu)=0,\,\forall \mu\in U$ and then $A f(\mu)=0,\,\forall \mu\in U$. Hence $\mu f(\mu)=0,\,\forall \mu\in U$. Thus $f(\mu)=0,\,\forall \mu\in U$. Therefore $BA$ has the SVEP at $\lambda$. 

Conversely, let $BA$ have the SVEP at $\lambda$. Then it follows from \cite[Proposition 2.1]{BZ} that $AB$ has the SVEP at $\lambda$. Now with the same argument as in the direct sense, we get that $CA$ has the SVEP at $\lambda$. Again by \cite[Proposition 2.1]{BZ},  $AC$ has the SVEP at $\lambda$.
\end{proof}
If $A\in\mathcal{L}(X,Y)$ and $B,C\in\mathcal{L}(Y,X)$ are such that $ABA=ACA$, then  the result of \cite[Theorem 9]{CDH} is an immediate consequence of Proposition \ref{prop1}.

\begin{thm}\label{thm1} Let $A\in\mathcal{L}(X,Y)$ and $B,C\in\mathcal{L}(Y,X)$ such that $A(BA)^2=ABACA=ACABA=(AC)^2A$. Then
$$\sigma_\beta(AC)=\sigma_\beta(BA).$$
In particular, $AC$ satisfies property $(\beta)$ if and only if $BA$ satisfies property $(\beta)$.
\end{thm}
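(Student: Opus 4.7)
The plan is to parallel the structure of Proposition~\ref{prop1}. First I would prove directly, from the identity $A(BA)^2 = ACABA$, that if $AC$ satisfies $(\beta)$ on an open set $V$ then so does $BA$; then I would close the converse by chaining the Benhida--Zerouali result that $AB \leftrightarrow BA$ and $AC \leftrightarrow CA$ share $(\beta)$ \cite{BZ} with the forward step applied after the $B \leftrightarrow C$ symmetry of the hypothesis.

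For the forward step, I would start from a sequence $(f_n) \subset \mathcal{O}(W, X)$, with $W \subseteq V$ open, satisfying $(BA - \mu) f_n(\mu) \to 0$ in $\mathcal{O}(W, X)$, and push it through the identity exactly as in the proof of Proposition~\ref{prop1}. Applying the bounded operator $ABA$ and using $ABA \cdot BA = A(BA)^2 = ACABA$ recasts the sequence as $(AC - \mu)\bigl(ABA f_n(\mu)\bigr) \to 0$ in $\mathcal{O}(W, Y)$, so property $(\beta)$ of $AC$ forces $ABA f_n \to 0$. Applying $A$ to the original sequence then yields $\mu A f_n(\mu) \to 0$; a complex-analytic division argument (detailed below) gives $A f_n \to 0$; applying $B$ produces $BA f_n \to 0$; and returning to $(BA - \mu) f_n \to 0$ yields $\mu f_n \to 0$ and, by the same division argument, $f_n \to 0$. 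Hence $BA$ has $(\beta)$ on $V$.

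For the converse, I would note that the hypothesis is invariant under $B \leftrightarrow C$: it rewrites as $A(CA)^2 = ACABA = ABACA = (AB)^2 A$, which follows from the original identity by a routine rearrangement of the four equal expressions. Consequently, the forward step, applied with the roles of $B$ and $C$ interchanged, produces the implication $AB$ has $(\beta)$ on $V$ $\Rightarrow$ $CA$ has $(\beta)$ on $V$. Combining this with \cite{BZ}, the chain
\[
BA \text{ has } (\beta) \;\Longrightarrow\; AB \text{ has } (\beta) \;\Longrightarrow\; CA \text{ has } (\beta) \;\Longrightarrow\; AC \text{ has } (\beta)
\]
closes the argument and yields the stated equality $\sigma_\beta(AC) = \sigma_\beta(BA)$.

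The one technical subtlety I anticipate is the passage from $\mu A f_n(\mu) \to 0$ to $A f_n(\mu) \to 0$ in $\mathcal{O}(W, Y)$ when $0 \in W$, where naive division by $\mu$ is illegal. I would handle it by observing that $A f_n$ is itself $Y$-valued analytic on $W$ and applying the maximum modulus principle on a small closed disc $\overline{D(0,r)} \subset W$: on $|\mu| = r$ one has $\|A f_n(\mu)\| = r^{-1}\|\mu A f_n(\mu)\|$, so that $\sup_{|\mu| \le r}\|A f_n(\mu)\| = \sup_{|\mu|=r}\|A f_n(\mu)\| \to 0$, while uniform convergence on the part of any given compact bounded away from $0$ is immediate. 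The same remark handles the final step $\mu f_n \to 0 \Rightarrow f_n \to 0$.
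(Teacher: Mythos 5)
Your proposal is correct and follows essentially the same route as the paper: push the sequence through the algebraic identity so that $(\beta)$ for $AC$ applies, then peel off the remaining factors via the division lemma ($\mu g_n\to 0\Rightarrow g_n\to 0$, which the paper quotes as \cite[Lemma 2.1]{BZ} and you reprove by maximum modulus), and close the converse by chaining the Benhida--Zerouali equivalence with the $B\leftrightarrow C$ symmetry of the hypothesis. The only (harmless) difference is in the first step: you apply $ABA$ directly to obtain $(AC-\mu)ABAf_n\to 0$ and hence $ABAf_n\to 0$ at once, whereas the paper applies $ACA$ first, deduces $ACAf_n\to 0$ and then $\mu ABAf_n\to 0$, so your variant saves one invocation of the division argument.
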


\begin{proof} Assume that $AC$ satisfies the Bishop's property $(\beta)$ on some open set $U$ in $\mathbb{C}$.  Let $V$ be an open subset of $U$  and let $(f_n)_n$ be a sequence of $X$-valued analytic functions on  $V$ 
 such that
\begin{equation}\label{eq1.1}(BA-\mu)f_n(\mu)\longrightarrow 0\mbox{ in }\mathcal{O}(V,X).
 \end{equation}
 Then $$ACA(BA-\mu)f_n(\mu)=(AC-\mu )ACAf_n(\mu)\longrightarrow 0\mbox{ in }\mathcal{O}(V,X).$$
 Since $AC$ satisfies the Bishop's property $(\beta)$, then $$ACAf_n(\mu)\longrightarrow 0\mbox{ in } \mathcal{O}(V,X).$$ Hence $$ABACAf_n(\mu)=A(BA)^2f_n(\mu)\longrightarrow 0 \mbox{ in } \mathcal{O}(V,X).$$ Thus it follows from (\ref{eq1.1}) that $$\mu ABAf_n(\mu)\longrightarrow 0 \mbox{ in } \mathcal{O}(V,X).$$ So by \cite[Lemma 2.1]{BZ},   $ABAf_n(\mu)$ converges to zero in $ \mathcal{O}(V,X).$ Then by taking $A$ values in equality (\ref{eq1.1}) , $\mu Af_n(\mu)$ converges to zero in $ \mathcal{O}(V,X)$. Hence $Af_n(\mu)$ converges to zero in $ \mathcal{O}(V,X)$ by \cite[Lemma 2.1]{BZ}. Again by (\ref{eq1.1}),  $\mu f_n(\mu)$ and then $f_n(\mu)$ converges to zero in $ \mathcal{O}(V,X)$. Which prove that $BA$ satisfies the Bishop's property $(\beta)$ on $U$.
 
 Conversely, Assume that $BA$ satisfies the Bishop's property $(\beta)$ on $U$. Then it follows from the proof of  \cite[Proposition 2.1]{BZ} that $AB$ satisfies the Bishop's property $(\beta)$ on $U$. Hence by the same way we get that $CA$ satisfies the Bishop's property $(\beta)$ on $U$. Thus by \cite[Proposition 2.1]{BZ}, $AC$ satisfies the Bishop's property $(\beta)$ on $U$. 
\end{proof}

Since $(\beta)$ and $(\delta)$ are dual to each other, then we get from Theorem \ref{thm1}
\begin{thm}\label{thm1b} Let $A\in\mathcal{L}(X,Y)$ and $B,C\in\mathcal{L}(Y,X)$ such that $A(BA)^2=ABACA=ACABA=(AC)^2A$. Then
$$\sigma_\delta(AC)=\sigma_\delta(BA).$$
In particular, $AC$ satisfies property $(\delta)$ if and only if $BA$ satisfies property $(\delta)$.
\end{thm}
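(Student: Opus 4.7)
The plan is to reduce Theorem \ref{thm1b} to Theorem \ref{thm1} via the duality between $(\beta)$ and $(\delta)$ that was recalled in the introduction, namely $\sigma_\delta(T)=\sigma_\beta(T^*)$. The idea is to take adjoints of the given operator identity, check that the adjoints satisfy a hypothesis of the same shape as Theorem \ref{thm1} (after a suitable relabelling), and then read off the equality.

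First, I would take adjoints of each of the four equal expressions in $A(BA)^2=ABACA=ACABA=(AC)^2A$. Using $(XYZ)^*=Z^*Y^*X^*$ one computes
\begin{equation*}
(A^*B^*)^2A^* \;=\; A^*C^*A^*B^*A^* \;=\; A^*B^*A^*C^*A^* \;=\; A^*(C^*A^*)^2,
\end{equation*}
where now $A^*\in\mathcal{L}(Y^*,X^*)$ and $B^*,C^*\in\mathcal{L}(X^*,Y^*)$. Setting $\widetilde A=A^*$, $\widetilde B=C^*$, $\widetilde C=B^*$, this reads exactly
\begin{equation*}
\widetilde A(\widetilde B\widetilde A)^2 \;=\; \widetilde A\widetilde B\widetilde A\widetilde C\widetilde A \;=\; \widetilde A\widetilde C\widetilde A\widetilde B\widetilde A \;=\; (\widetilde A\widetilde C)^2\widetilde A,
\end{equation*}
so the triple $(\widetilde A,\widetilde B,\widetilde C)$ satisfies the hypothesis of Theorem \ref{thm1}.

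Next I would apply Theorem \ref{thm1} to $(\widetilde A,\widetilde B,\widetilde C)$ to obtain $\sigma_\beta(\widetilde A\widetilde C)=\sigma_\beta(\widetilde B\widetilde A)$, that is, $\sigma_\beta(A^*B^*)=\sigma_\beta(C^*A^*)$. Since $A^*B^*=(BA)^*$ and $C^*A^*=(AC)^*$, this is $\sigma_\beta((BA)^*)=\sigma_\beta((AC)^*)$. Invoking the duality identities $\sigma_\delta(BA)=\sigma_\beta((BA)^*)$ and $\sigma_\delta(AC)=\sigma_\beta((AC)^*)$ recorded in the introduction, I get $\sigma_\delta(AC)=\sigma_\delta(BA)$, which is the desired equality; the statement about $(\delta)$ itself follows by taking the whole spectra to be empty.

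The only subtle point, really the sole potential obstacle, is making sure the roles of $B$ and $C$ are swapped in exactly the right way when one passes to the adjoints; the symmetric form of the hypothesis under the interchange $B\leftrightarrow C$ is what makes the relabelling $\widetilde B=C^*,\widetilde C=B^*$ legitimate, and it is this symmetry of the four-term identity that makes the duality argument succeed. Once this is observed, the proof is a purely formal transfer through Theorem \ref{thm1} and requires no further analysis of functions or spectral subspaces.
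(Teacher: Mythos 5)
Your proposal is correct and is exactly the route the paper takes: the paper's proof of Theorem \ref{thm1b} is the one-line remark that $(\beta)$ and $(\delta)$ are dual and the result follows from Theorem \ref{thm1}. You have simply made explicit the adjoint identity and the relabelling $\widetilde B=C^*$, $\widetilde C=B^*$ that the paper leaves implicit, and those details check out.
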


Since decomposability is equivalent to both $(\beta)$ and $(\delta)$, then it follows immediately from Theorem \ref{thm1} and Theorem (\ref{thm1b}):
\begin{cor}\label{C1} Let $A\in\mathcal{L}(X,Y)$ and $B,C\in\mathcal{L}(Y,X)$ such that $A(BA)^2=ABACA=ACABA=(AC)^2A$. Then
$$\sigma_{dec}(AC)=\sigma_{dec}(BA).$$
In particular, $AC$ is decomposable if and only if $BA$ is decomposable.
\end{cor}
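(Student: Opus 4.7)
The plan is to obtain this corollary as an immediate consequence of the two preceding theorems, using the general identity recalled in the introduction, namely
\[
\sigma_{dec}(T)=\sigma_{\beta}(T)\cup\sigma_{\delta}(T)
\]
valid for any $T\in\mathcal{L}(X)$. Since this decomposition formula is stated in the paper as a known fact (following from the characterization of decomposability as the conjunction of properties $(\beta)$ and $(\delta)$), the only thing to do is to apply it simultaneously to $AC$ and to $BA$, and then invoke Theorem \ref{thm1} and Theorem \ref{thm1b} to rewrite the pieces.

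Concretely, I would write
\[
\sigma_{dec}(AC)=\sigma_{\beta}(AC)\cup\sigma_{\delta}(AC)=\sigma_{\beta}(BA)\cup\sigma_{\delta}(BA)=\sigma_{dec}(BA),
\]
where the first and last equalities are the general identity applied to $AC$ and $BA$ respectively, and the middle equality uses Theorem \ref{thm1} (for the $(\beta)$-spectrum) together with Theorem \ref{thm1b} (for the $(\delta)$-spectrum). For the ``in particular'' clause, I would simply observe that $AC$ is decomposable precisely when $\sigma_{dec}(AC)=\emptyset$, so the equality of the decomposability spectra instantly yields the equivalence of decomposability of $AC$ and $BA$.

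There is no genuine obstacle here: the hypothesis $A(BA)^2=ABACA=ACABA=(AC)^2A$ has already been used in the proofs of Theorem \ref{thm1} and Theorem \ref{thm1b}, so at this stage it enters only through the conclusions of those two theorems. The only point requiring a trace of care is to make sure that the identity $\sigma_{dec}(T)=\sigma_{\beta}(T)\cup\sigma_{\delta}(T)$ is indeed available; this is explicitly recorded earlier in the excerpt, so it can be cited without further justification. Thus the corollary is a one-line combination of the two previous results.
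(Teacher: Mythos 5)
Your proof is correct and follows exactly the route the paper takes: the corollary is deduced from Theorems \ref{thm1} and \ref{thm1b} via the identity $\sigma_{dec}(T)=\sigma_{\beta}(T)\cup\sigma_{\delta}(T)$ recorded in the introduction. Nothing is missing.
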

In order to show that $AC$ and $BA$ share the property $(\beta_\epsilon)$, we need the following lemma
\begin{lem}{\rm \cite{BZ}}\label{lembz} Let $U$ be an open set and $(f_n)_n$ be a sequence in $\mathcal{E}(U,X)$ such that $(\mu f_n(\mu))_n$ converge to zero in $\mathcal{E}(U,X)$. Then $(f_n)_n$ converges to zero in $\mathcal{E}(U,X)$.
\end{lem}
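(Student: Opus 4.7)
The plan is to realize multiplication by $\mu$ as a continuous linear bijection from the Fréchet space $\mathcal{E}(U,X)$ onto a closed subspace of itself, and then invoke the open mapping theorem: writing $M f(\mu) = \mu f(\mu)$, if one proves $M$ is a topological isomorphism onto its range, then $M f_n \to 0$ immediately forces $f_n \to 0$.

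If $0 \notin U$, this is essentially trivial: the function $\mu \mapsto 1/\mu$ lies in $\mathcal{E}(U)$, pointwise multiplication by a fixed $C^{\infty}$ scalar function is continuous on $\mathcal{E}(U,X)$ (the Leibniz rule controls all derivatives on any compact set), and the identity $f_n(\mu) = \mu^{-1}(\mu f_n(\mu))$ yields $f_n \to 0$ in $\mathcal{E}(U,X)$ directly. So one may assume $0 \in U$.

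Under this assumption, I would verify three properties of $M$. Continuity on $\mathcal{E}(U,X)$ follows from the Leibniz identity $(\mu f(\mu))^{(k)} = \mu f^{(k)}(\mu) + k f^{(k-1)}(\mu)$, which bounds the order-$k$ seminorm of $M f$ on a compact $K$ by a linear combination of seminorms of $f$ of order $\le k$ on $K$. Injectivity is immediate: $\mu f(\mu) \equiv 0$ forces $f = 0$ off the isolated point $0$, and continuity then gives $f(0) = 0$. The crucial step is identifying the range of $M$ as the closed hyperplane $H := \{\,g \in \mathcal{E}(U,X) : g(0)=0\,\}$. The inclusion $\operatorname{ran}(M) \subseteq H$ is obvious. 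For the reverse, given $g \in H$, pick an open disc $D \subset U$ centered at the origin and invoke Hadamard's lemma (valid for Banach-valued $C^{\infty}$ functions by the usual proof): the function $f_D(\mu) = \int_0^1 g'(t\mu)\,dt$ lies in $\mathcal{E}(D,X)$ and satisfies $\mu f_D(\mu) = g(\mu)$ on $D$, while on $U \setminus \{0\}$ the formula $f(\mu) = g(\mu)/\mu$ is smooth; the two definitions agree on $D \setminus \{0\}$ and glue to a single $f \in \mathcal{E}(U,X)$ with $M f = g$. Since evaluation at $0$ is continuous on $\mathcal{E}(U,X)$, the subspace $H$ is closed, hence itself Fréchet.

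With $M : \mathcal{E}(U,X) \to H$ now a continuous linear bijection between Fréchet spaces, the open mapping theorem gives continuity of $M^{-1}$. Because $M f_n \to 0$ in $\mathcal{E}(U,X)$ and $0 \in H$, the convergence also holds in the subspace topology of $H$, so $f_n = M^{-1}(M f_n) \to 0$ in $\mathcal{E}(U,X)$, as claimed. The main technical obstacle is the range identification: one must verify that any smooth $g$ vanishing at $0$ admits a smooth quotient $g/\mu$, which is precisely Hadamard's lemma on a convex neighborhood of the origin combined with the free quotient away from it.
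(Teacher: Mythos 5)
The paper offers no proof of this lemma; it is simply quoted from \cite{BZ}, so your argument has to stand on its own. Its skeleton is sound: for Fr\'echet spaces the conclusion ``$\mu f_n(\mu)\to 0$ implies $f_n\to 0$'' is exactly the assertion that the multiplication operator $M$ is a topological monomorphism, which by the open mapping theorem amounts to $M$ being injective with closed range; your treatment of the case $0\notin U$, of the continuity of $M$, and of its injectivity is correct.

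The range identification, however, is false, and that is the step carrying all the content. Here $U$ is an open subset of $\mathbb{C}$ and $\mathcal{E}(U,X)$ consists of functions that are $C^\infty$ in the two real variables $(x,y)$, $\mu=x+iy$; they are not holomorphic, and $\mu$ is the complex coordinate, not a real one. Hadamard's lemma applied to a smooth $g$ with $g(0)=0$ yields $g(\mu)=x\,g_1(\mu)+y\,g_2(\mu)$, i.e.\ division by the \emph{real} coordinates, not a factorization $g(\mu)=\mu f(\mu)$; the primitive $\int_0^1 g'(t\mu)\,dt$ you write down presupposes a one-variable (or complex-analytic) derivative $g'$ that does not exist in this setting. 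Concretely, $g(\mu)=\overline{\mu}\,v$ for a fixed nonzero $v\in X$ vanishes at $0$ but is not of the form $\mu f(\mu)$ with $f\in\mathcal{E}(U,X)$: on $U\setminus\{0\}$ one would need $f(\mu)=(\overline{\mu}/\mu)v$, which tends to $v$ along the positive real axis and to $-iv$ along the ray $\arg\mu=\pi/4$, hence has no continuous extension to $0$. So the range of $M$ is a proper closed subspace of your hyperplane $H$, and the continuous bijection $M:\mathcal{E}(U,X)\to H$ on which your open-mapping argument rests does not exist. To rescue the strategy you must prove directly that the range of $M$ is closed — equivalently, establish a division estimate bounding each seminorm of $f$ on a compact set by suitable seminorms of $\mu f$. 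For the real-analytic function $\mu=x+iy$ this is true, but it is essentially the Malgrange--H\"ormander division theorem for smooth functions (or must be derived by hand via Taylor's formula at the origin); it is not a consequence of Hadamard's lemma.
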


\begin{thm}\label{thmeps} Let $A\in\mathcal{L}(X,Y)$ and $B,C\in\mathcal{L}(Y,X)$ such that $A(BA)^2=ABACA=ACABA=(AC)^2A$. Then 
$$\sigma_{\beta_\epsilon}(AC)=\sigma_{\beta_\epsilon}(BA).$$
In particular, $AC$ satisfies property $(\beta_\epsilon)$ if and only if $BA$ satisfies property $(\beta_\varepsilon)$.
\end{thm}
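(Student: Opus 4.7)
The plan is to mimic step by step the proof of Theorem \ref{thm1}, replacing the analytic Fr\'echet space $\mathcal{O}(V,X)$ by its $C^\infty$ counterpart $\mathcal{E}(V,X)$ and invoking Lemma \ref{lembz} wherever \cite[Lemma 2.1]{BZ} was used. The algebraic identities coming from the hypothesis, namely $ACABA=(AC)^2A$ and $A(BA)^2=ABACA$, still do all the work, and since left-multiplication by a bounded operator is continuous on $\mathcal{E}(V,\cdot)$ (derivatives and operator action commute, and compact-open seminorms scale by $\|T\|$), every convergence passes through cleanly.

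For the direct inclusion, I would assume $AC$ satisfies $(\beta_\epsilon)$ on an open set $U$, pick an open $V\subseteq U$, and take a sequence $(f_n)\subset\mathcal{E}(V,X)$ with $(BA-\mu)f_n(\mu)\to 0$ in $\mathcal{E}(V,X)$. Applying $ACA$ on the left and using $ACABA=(AC)^2A$ produces $(AC-\mu)ACAf_n(\mu)\to 0$, and property $(\beta_\epsilon)$ for $AC$ forces $ACAf_n(\mu)\to 0$. Left-multiplying by $AB$ and using $ABACA=A(BA)^2$ yields $A(BA)^2f_n(\mu)\to 0$; combining with $ABA(BA-\mu)f_n(\mu)\to 0$ gives $\mu ABAf_n(\mu)\to 0$, whence Lemma \ref{lembz} produces $ABAf_n(\mu)\to 0$. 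Applying $A$ to $(BA-\mu)f_n(\mu)\to 0$ and using Lemma \ref{lembz} again yields $Af_n(\mu)\to 0$, and a last combination in $(BA-\mu)f_n(\mu)$ followed by Lemma \ref{lembz} delivers $f_n(\mu)\to 0$ in $\mathcal{E}(V,X)$. Thus $BA$ satisfies $(\beta_\epsilon)$ on $U$.

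For the converse, I would exploit the symmetry of the hypothesis $A(BA)^2=ABACA=ACABA=(AC)^2A$ under the swap $B\leftrightarrow C$, exactly as in the proof of Theorem \ref{thm1}. Starting from $(\beta_\epsilon)$ for $BA$, the $(\beta_\epsilon)$-version of \cite[Proposition 2.1]{BZ} transfers this property to $AB$; the direct argument above, reread with the roles of $B$ and $C$ interchanged, then gives $(\beta_\epsilon)$ for $CA$; and a final appeal to the same transfer result yields $(\beta_\epsilon)$ for $AC$.

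The main obstacle I expect is bookkeeping rather than substance: one has to check that Fr\'echet convergence in $\mathcal{E}(V,\cdot)$ is preserved under left-multiplication by each bounded operator that appears ($A$, $AB$, $ABA$, $ACA$), and that Lemma \ref{lembz} is applied on the appropriate ambient space at each step. Once these continuity issues are in place, the argument is a mechanical translation of the proof of Theorem \ref{thm1}, so I expect no further difficulty.
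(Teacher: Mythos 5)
Your proposal is correct and is exactly the paper's proof: the author simply states that the result follows from Lemma \ref{lembz} together with the same argument as in the proof of Theorem \ref{thm1}, which is precisely the translation you carry out. The detailed chain of left-multiplications and the use of the $(\beta_\epsilon)$-analogue of \cite[Proposition 2.1]{BZ} for the converse match the intended argument step for step.
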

\begin{proof}
Now using Lemma \ref{lembz} and the same argument as in the proof of Theorem \ref{thm1} we get the result.
\end{proof}
\begin{cor} Let $A\in\mathcal{L}(X,Y)$ and $B,C\in\mathcal{L}(Y,X)$ such that $A(BA)^2=ABACA=ACABA=(AC)^2A$. Then $AC$ is subscalar if and only if $BA$ is subscalar.
\end{cor}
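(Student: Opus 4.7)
The plan is to chain two facts that are already in the paper and obtain the Corollary as a one-line consequence. Recall first the Eschmeier--Putinar characterization quoted in the introduction (just after the definition of property $(\beta_\epsilon)$, with reference \cite{EP}): an operator $T\in\mathcal{L}(X)$ is subscalar, i.e. has a generalized scalar extension, if and only if $T$ satisfies property $(\beta_\epsilon)$, equivalently $\sigma_{\beta_\epsilon}(T)=\emptyset$. Thus ``$AC$ is subscalar'' is synonymous with ``$\sigma_{\beta_\epsilon}(AC)=\emptyset$'', and similarly for $BA$.

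With this reformulation, the Corollary is immediate from Theorem \ref{thmeps}: under the standing hypothesis $A(BA)^2=ABACA=ACABA=(AC)^2A$, that theorem gives the equality of sets
\[\sigma_{\beta_\epsilon}(AC)=\sigma_{\beta_\epsilon}(BA).\]
In particular, one of these closed sets is empty if and only if the other is, so $AC$ satisfies $(\beta_\epsilon)$ if and only if $BA$ does. Invoking the $(\beta_\epsilon)\iff$ subscalar equivalence on both sides yields the claim.

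Since both ingredients are already established earlier in the paper, there is no substantive obstacle here; the only ``step'' is bookkeeping. If the corollary needed to be made more self-contained, the worst case would be to reprove that $\sigma_{\beta_\epsilon}(T)$ is precisely the obstruction to subscalarity, but the paper explicitly cites \cite{EP} for this and so no additional work is required. The proof therefore amounts to a single line citing Theorem \ref{thmeps} and the Eschmeier--Putinar theorem.
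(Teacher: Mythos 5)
Your proof is correct and matches the paper's (implicit) argument exactly: the corollary is stated as an immediate consequence of Theorem \ref{thmeps} together with the Eschmeier--Putinar equivalence between property $(\beta_\epsilon)$ and subscalarity already quoted in the introduction. Nothing further is needed.
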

\section{local spectrum and related subspaces}
\begin{thm}\label{thm10} Let $A\in\mathcal{L}(X,Y)$ and $B,C\in\mathcal{L}(Y,X)$ such that $A(BA)^2=ABACA=ACABA=(AC)^2A$. Then \\
i) $\sigma_{AC}(ABAx)\subseteq\sigma_{BA}(x)\subseteq\sigma_{AC}(ABAx)\cup\{0\}$.\\
ii) $\sigma_{AB}(ACAx)\subseteq\sigma_{CA}(x)\subseteq\sigma_{AB}(ACAx)\cup\{0\}$.\\
iii) $\sigma_{BA}(BACACx)\subseteq\sigma_{AC}(x)\subseteq\sigma_{BA}(BACACx)\cup\{0\}$.
\end{thm}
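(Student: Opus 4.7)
The plan is to treat all three double inclusions by a single two-step template: an intertwining identity gives the forward inclusion for free, while the reverse inclusion (which can only hold for $\mu\neq 0$) requires a Jacobson-type explicit construction. As preparation, I would first extract from the hypothesis the two operator identities
\[
A(B-C)ABA=0 \quad\text{and}\quad A(B-C)ACA=0,
\]
obtained by subtracting $A(BA)^{2}=ACABA$ and $ABACA=(AC)^{2}A$ respectively, then factoring $AB-AC=A(B-C)$. These will drive every calculation below. I would also record the useful consequence $ABACAC=(AC)^{3}$, obtained by right-multiplying $ABACA=(AC)^{2}A$ by $C$.

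For part (i), the forward inclusion is immediate: the hypothesis $A(BA)^{2}=ACABA$ is exactly the intertwining $(AC)(ABA)=(ABA)(BA)$, so if $(BA-\mu)f(\mu)=x$ on a neighborhood of $\lambda$, then $ABAf(\mu)$ is an analytic local resolvent for $AC$ at $ABAx$. For the reverse inclusion, pick $\lambda\in\rho_{AC}(ABAx)$ with $\lambda\neq 0$ and $g$ analytic on an open set $U\subset\mathbb{C}\setminus\{0\}$ with $(AC-\mu)g(\mu)=ABAx$. I propose the ansatz
\[
f(\mu):=\frac{Bg(\mu)-BAx-\mu x}{\mu^{2}},
\]
and a direct expansion shows $(BA-\mu)f(\mu)=x+\mu^{-2}\,BA(B-C)g(\mu)$, so everything reduces to proving $BA(B-C)g(\mu)=0$ on $U$. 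The decisive observation is that $(AC-\mu)g(\mu)=ABAx$ rearranges to $g(\mu)=A\bigl((Cg(\mu)-BAx)/\mu\bigr)$, placing $g(\mu)$ in $\mathcal{R}(A)$; the two structural identities then collapse $BA(B-C)g(\mu)$ to zero.

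Part (ii) is the same statement as (i) with $B$ and $C$ interchanged, and since the hypothesis is invariant under the swap $B\leftrightarrow C$ (the four middle products $ABABA$, $ABACA$, $ACABA$, $ACACA$ are all equal), no new argument is needed. For part (iii), the forward inclusion uses the intertwining $(BA)(BACAC)=(BACAC)(AC)$, whose verification reduces to the identity $A(B-C)ACAC=0$, which is an immediate right-multiplication of $A(B-C)ACA=0$. For the reverse, given $(BA-\mu)f(\mu)=BACACx$ on $U\subset\mathbb{C}\setminus\{0\}$, the rearrangement $f(\mu)=BA\bigl((f(\mu)-CACx)/\mu\bigr)$ puts $f(\mu)\in\mathcal{R}(BA)$, hence $Af(\mu)\in\mathcal{R}(ABA)$, which forces $A(B-C)Af(\mu)=0$. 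Applying $A$ to $(BA-\mu)f(\mu)=BACACx$ and invoking $ABACAC=(AC)^{3}$ yields $(AC-\mu)(Af(\mu))=(AC)^{3}x$. A standard threefold iteration then produces
\[
g(\mu):=\frac{Af(\mu)-(AC)^{2}x-\mu\,ACx-\mu^{2}x}{\mu^{3}},
\]
which is analytic on $U$ and satisfies $(AC-\mu)g(\mu)=x$.

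The main obstacle throughout is not guessing the ansatz (which is essentially dictated by the classical Jacobson formula together with a rational denominator absorbing the extra factors $ABA$ or $BACAC$) but rather verifying that the residual error after substitution, always of the shape $BA(B-C)(\cdot)$ or $A(B-C)A(\cdot)$, vanishes. This does not follow from the hypothesis alone: the decisive point in each case is that the given local resolvent automatically lies in $\mathcal{R}(A)$ or $\mathcal{R}(BA)$, and it is this \emph{range information}, combined with the two structural identities, that kills the error.
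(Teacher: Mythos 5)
Your proposal is correct, and every identity you rely on does follow from the hypothesis: I checked in particular that $(BA-\mu)\bigl(\mu^{-2}(Bg(\mu)-BAx-\mu x)\bigr)=x+\mu^{-2}BA(B-C)g(\mu)$, and that writing $g(\mu)=Ah$ with $h=(Cg(\mu)-BAx)/\mu$ reduces the error term to $\mu^{-1}\bigl(A(B-C)ACAh-A(B-C)ABAx\bigr)=0$ via the two structural identities; the verifications in part (iii), including $ABACAC=(AC)^3$ and the threefold resolvent formula, also go through. However, your route is genuinely different from the paper's. For the reverse inclusion in (i), the paper does not build an explicit local resolvent at all: after the single intertwining step $ABA(CA-\mu)f(\mu)=(AB-\mu)ABAf(\mu)$ it invokes the classical local-spectrum form of Jacobson's lemma for plain products (Proposition 3.1 of Benhida--Zerouali, relating $\sigma_{RS}(Ry)$ and $\sigma_{SR}(y)$ away from $0$) five times in a row to descend from $\sigma_{AB}(ABABAx)$ to $\sigma_{BA}(x)$; and it disposes of (iii) by asserting it follows from (i) and (ii). Your version replaces that cited machinery with self-contained Jacobson-type ansätze whose error terms are killed by the identities $A(B-C)ABA=0$ and $A(B-C)ACA=0$ together with the range information extracted from the resolvent equation. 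What the paper's approach buys is brevity given the external reference; what yours buys is independence from \cite{BZ} and, notably, an actual argument for part (iii), whose derivation "from i) and ii)" the paper leaves entirely implicit. One small caution: the phrase "placing $g(\mu)$ in $\mathcal{R}(A)$" slightly undersells what is needed, since $A(B-C)A$ does not annihilate all of $\mathcal{R}(A)$; it is the specific preimage $(Cg(\mu)-BAx)/\mu$, expanded once more, that makes the two structural identities applicable, so you should display that two-step substitution explicitly in a final write-up.
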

\begin{proof} i) Let $\lambda\notin\sigma_{BA}(x)$. Then there exists an open neighborhood $U$ of $\lambda$ and an $X$-valued analytic function $f$ on $U$ such that $$(BA-\mu)f(\mu)=x\mbox{ for all }\mu\in U.$$ Hence for any $\mu\in U$, $ABAx=ABA(BA-\mu)f(\mu)=(ABABA-\mu ABA)f(\mu)=(AC-\mu)ABAf(\mu)$. Since $ABAf(\mu)$ is analytic on $U$, then $\lambda\notin\sigma_{AC}(ABAx)$. Thus $\sigma_{AC}(ABAx)\subseteq\sigma_{BA}(x)$.

Now let $\lambda\notin \sigma_{AC}(ABAx)\cup\{0\}$. By virtue of \cite[Proposition 3.1]{BZ}, $\lambda\notin \sigma_{CA}(BAx)\cup\{0\}$. Then there exists an open neighborhood $U$ of $\lambda$ with $0\notin U$ and an $X$-valued analytic function $f$ on $U$ such that $$(CA-\mu)f(\mu)=BAx\mbox{ for all }\mu\in U.$$ Hence $ABABAx=ABA(CA-\mu)f(\mu)=(AB-\mu) ABAf(\mu)$ for all $\mu\in U$. Thus $\lambda\notin \sigma_{AB}(ABABAx)\cup\{0\}$. Therefore
$$
\begin{array}{ccc}\lambda\notin \sigma_{AB}(ABABAx)\cup\{0\}&\Longrightarrow&\lambda\notin \sigma_{BA}(BABAx)\cup\{0\}\\
&\Longrightarrow&\lambda\notin\sigma_{AB}(ABAx)\cup\{0\}\\
&\Longrightarrow&\lambda\notin\sigma_{BA}(BAx)\cup\{0\}\\
&\Longrightarrow&\lambda\notin\sigma_{AB}(Ax)\cup\{0\}\\
&\Longrightarrow&\lambda\notin\sigma_{BA}(x)\cup\{0\}.
\end{array}
$$In the last implications we use repetitively (\cite[Proposition 3.1]{BZ}).\\
ii) goes similarly.\\
iii) follows from i) and ii).
\end{proof}

\begin{cor}\label{cor31} Let $A\in\mathcal{L}(X,Y)$ and $B,C\in\mathcal{L}(Y,X)$ such that $A(BA)^2=ABACA=ACABA=(AC)^2A$. The following assertions hold\\
\indent i) If $AC$ is injective, $\sigma_{AC}(ABAx)=\sigma_{BA}(x)$ for all $x\in X$.\\
\indent ii) If $AB$ is injective, $\sigma_{AB}(ACAx)=\sigma_{CA}(x)$ for all $x\in X$.
\end{cor}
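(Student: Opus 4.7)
The plan is to strengthen Theorem~\ref{thm10}(i), which already gives $\sigma_{AC}(ABAx)\subseteq\sigma_{BA}(x)\subseteq\sigma_{AC}(ABAx)\cup\{0\}$, by removing the stray point $\{0\}$ under the injectivity hypothesis. Since the forward inclusion holds unconditionally, the task reduces to showing that whenever $AC$ is injective and $0\notin\sigma_{AC}(ABAx)$, one also has $0\notin\sigma_{BA}(x)$. I would therefore pick an analytic $g:U\to Y$ on a neighborhood $U$ of $0$ with $(AC-\mu)g(\mu)=ABAx$, and try to manufacture an analytic $f:U'\to X$ on some $U'\ni 0$ with $(BA-\mu)f(\mu)=x$.

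A useful preliminary observation is that injectivity of $AC$ automatically forces $C$ to be injective: if $Cy=0$ then $ACy=0$, so $y=0$. Consequently $g(0)$ is the unique element of $Y$ with $ACg(0)=ABAx$, and setting $h(\mu):=(g(\mu)-g(0))/\mu$ yields an analytic $Y$-valued function with $(AC-\mu)h(\mu)=g(0)$. A natural candidate, modelled on the Benhida--Zerouali construction \cite[Proposition~3.1]{BZ}, is then $f(\mu):=Ch(\mu)+\alpha(\mu)$ for a correction $\alpha$ to be determined. Direct computation gives $(BA-\mu)Ch(\mu)=Bg(0)+\mu(B-C)h(\mu)$, so $\alpha$ must satisfy $(BA-\mu)\alpha(\mu)=x-Bg(0)-\mu(B-C)h(\mu)$, which I would solve coefficient by coefficient as a formal power series, using the consequences $A(BA)^n=(AC)^n A$ ($n\ge 2$) of $A(BA)^2=(AC)^2A$ together with the remaining identity $ABACA=ACABA$.

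The main obstacle is verifying the base case of this recursion, namely $x-Bg(0)\in\mathrm{Ran}(BA)$, and then checking that each successive coefficient $\alpha_n$ can be selected so that the series defines an analytic function on some neighborhood of $0$. The injectivity of $AC$ enters here through the immediate consequence $A\bigl(Cg(0)-BAx\bigr)=0$ of $ACg(0)=ABAx$, which places $Cg(0)-BAx$ in $\ker A$; combining this membership with the full set of algebraic relations should produce the required range condition. This is the technical core of the argument, and I expect it to mirror the step in the proof of Theorem~\ref{thm10}(i) where the chain of Jacobson-type identifications through $\sigma_{CA}(BAx)$, $\sigma_{AB}(ABABAx)$, and so on is run, now with each $\cup\{0\}$ removed at the expense of the injectivity hypothesis.

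Part (ii) then follows from part (i) by the $B\leftrightarrow C$ symmetry of the hypothesis: interchanging $B$ and $C$ permutes the four expressions $A(BA)^2$, $ABACA$, $ACABA$, $(AC)^2A$ among themselves (using the identities $(AB)^2A=A(BA)^2=(AC)^2A=ACACA=A(CA)^2$ and $ABACA\leftrightarrow ACABA$), so the full hypothesis is invariant under this swap, and (ii) is exactly the statement of (i) after the interchange.
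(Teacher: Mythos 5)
Your reduction to the single point $\lambda=0$, the observation that $AC$ injective forces $C$ injective, the identity $(BA-\mu)Ch(\mu)=Bg(0)+\mu(B-C)h(\mu)$, and the $B\leftrightarrow C$ symmetry used for part (ii) are all correct. But the step you yourself flag as ``the technical core'' --- establishing $x-Bg(0)\in\mathcal{R}(BA)$ and then summing the coefficient-by-coefficient recursion to an actual analytic function --- is not a gap that can be closed, because the corollary as stated is false. Take $X=Y=\ell^2$, let $S$ be the forward shift, and put $A=S^*$, $B=C=S$. All four products in the hypothesis equal $S^*$, and $AC=S^*S=I$ is injective; yet for $x=e_0$ one has $ABAx=S^*e_0=0$, so $\sigma_{AC}(ABAx)=\sigma_{I}(0)=\emptyset$, while $BA=SS^*$ kills $e_0$ and has SVEP, so $\sigma_{BA}(e_0)=\{0\}$. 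This is exactly where your construction stalls: here $g\equiv 0$, so $x-Bg(0)=e_0$, which does not lie in $\mathcal{R}(SS^*)=\{e_0\}^{\perp}$, and the recursion cannot start. Even in cases where the base case held, choosing preimages $\alpha_n$ term by term only produces a formal series; without a bounded right inverse for $BA$ you have no control on $\|\alpha_n\|$ and hence no positive radius of convergence. The root of the problem is that the operator actually intertwining $BA$ with $AC$ is $ABA$ (via $ABA\cdot BA=AC\cdot ABA$), and it is injectivity of that map --- not of $AC$ --- that the Benhida--Zerouali argument you are imitating requires; in the example $ABA=S^*$ is far from injective.

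For comparison, the paper's own proof is a one-line deduction: it asserts that injectivity of $AC$ gives $0\notin\sigma(AC)$ and that the equality then ``follows at once'' from Theorem~\ref{thm10}. This is doubly flawed: injectivity does not imply $0\notin\sigma(AC)$ on an infinite-dimensional space, and even when $AC$ is invertible (as in the shift example, where $AC=I$) Theorem~\ref{thm10} only removes $0$ from $\sigma_{AC}(ABAx)$, not from $\sigma_{BA}(x)$, which is the inclusion one actually needs. So your instinct that genuine work is required at the point $0$ --- work the paper skips --- was sound; it is the hypothesis of the corollary, not your effort, that is too weak.
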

\begin{proof} If $AC$ (resp. $AB$) is injective then $0\notin\sigma(AC)$ (resp. $0\notin\sigma(AC)$). Hence the result follows at once from  Theorem \ref{thm10}.
\end{proof}

In particular, if $A$, $B$ and $C$ are injective, then
$$\sigma_{AC}(ABAx)=\sigma_{BA}(x)\mbox{ and } \sigma_{AB}(ACAx)=\sigma_{CA}(x),\mbox{ for all }x\in X.$$

Before that we study the Dunford property $(\mathcal{C})$ for $AC$ and $BA$, we start by the following lemmas.
\begin{lem}\label{lem10} Let $A\in\mathcal{L}(X,Y)$ and $B,C\in\mathcal{L}(Y,X)$ such that $A(BA)^2=ABACA=ACABA=(AC)^2A$. Let $F$ be a closed subset of $\mathbb{C}$ such that $0\in F$. Then the following are equivalent:\\
\indent i) $Y_{AC}(F)$ is closed.\\
\indent ii) $X_{BA}(F)$ is closed.
\end{lem}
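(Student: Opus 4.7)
The plan is to use Theorem~\ref{thm10} to identify each of $X_{BA}(F)$ and $Y_{AC}(F)$ as the preimage of the other under a bounded linear map, after which the equivalence reduces to the standard fact that a continuous preimage of a closed set is closed.

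More precisely, Theorem~\ref{thm10}(i) asserts
\[
\sigma_{AC}(ABAx)\subseteq\sigma_{BA}(x)\subseteq\sigma_{AC}(ABAx)\cup\{0\}
\]
for every $x\in X$. Since $0\in F$, this double inclusion yields the logical equivalence $\sigma_{BA}(x)\subseteq F\iff\sigma_{AC}(ABAx)\subseteq F$, which rephrases as the set identity
\[
X_{BA}(F)=(ABA)^{-1}\bigl(Y_{AC}(F)\bigr).
\]
Because $ABA\in\mathcal{L}(X,Y)$ is continuous, closedness of $Y_{AC}(F)$ in $Y$ immediately forces closedness of its preimage $X_{BA}(F)$ in $X$, giving (i)$\Rightarrow$(ii).

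For the reverse direction, I would invoke Theorem~\ref{thm10}(iii), which provides
\[
\sigma_{BA}(BACACy)\subseteq\sigma_{AC}(y)\subseteq\sigma_{BA}(BACACy)\cup\{0\}
\]
for every $y\in Y$. Using again that $0\in F$, this double inclusion gives $Y_{AC}(F)=(BACAC)^{-1}\bigl(X_{BA}(F)\bigr)$, and since $BACAC\in\mathcal{L}(Y,X)$ is continuous, closedness of $X_{BA}(F)$ transfers back to $Y_{AC}(F)$, proving (ii)$\Rightarrow$(i).

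There is no genuine obstacle here; the one subtle point deserving emphasis is the essential use of the hypothesis $0\in F$. Without it, the inclusions from Theorem~\ref{thm10} yield only the one-sided containments $X_{BA}(F)\subseteq(ABA)^{-1}(Y_{AC}(F))$ and $Y_{AC}(F)\subseteq(BACAC)^{-1}(X_{BA}(F))$, which are insufficient to transfer closedness. It is precisely the extra $\{0\}$ in the right-hand side of each double inclusion, together with the standing assumption $0\in F$, that upgrades the containments to equalities and allows the proof to collapse to continuity of $ABA$ and $BACAC$.
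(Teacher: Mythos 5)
Your proof is correct, and the reformulation of the closedness transfer as ``preimage of a closed set under a bounded operator'' is a clean way to package what the paper does with convergent sequences: in the direction (i)$\Rightarrow$(ii) your identity $X_{BA}(F)=(ABA)^{-1}\bigl(Y_{AC}(F)\bigr)$ is exactly the content of the paper's sequence argument, and your emphasis on why $0\in F$ is needed to upgrade the one-sided containment to an equality is precisely the point. Where you genuinely diverge is the converse. The paper does not use part (iii) of Theorem \ref{thm10} here; instead it first passes from closedness of $X_{BA}(F)$ to closedness of $Y_{AB}(F)$ via \cite[Lemma 2.4]{ZZ2} (the classical $AB$ versus $BA$ transfer, again requiring $0\in F$), then reruns the forward argument with part (ii) of Theorem \ref{thm10} to conclude that $X_{CA}(F)$ is closed, and finally applies \cite[Lemma 2.4]{ZZ2} once more to reach $Y_{AC}(F)$. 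Your route invokes part (iii) directly to get $Y_{AC}(F)=(BACAC)^{-1}\bigl(X_{BA}(F)\bigr)$, which is shorter, symmetric with the forward direction, and entirely self-contained within Theorem \ref{thm10} (no appeal to the external lemma of \cite{ZZ2}); the only cost is that part (iii) is itself derived in the paper by composing parts (i) and (ii), so the two arguments are ultimately built from the same ingredients, with yours packaging the composition once and for all inside Theorem \ref{thm10} rather than repeating it at the level of spectral subspaces.
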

\begin{proof} Assume that $Y_{AC}(F)$ is closed and let $(x_n)_n$ be a sequence in $X_{BA}(F)$ which converges to some $x$ in $X$. Then $\sigma_{BA}(x_n)\subset F$. By Theorem \ref{thm10}, part i), we have $\sigma_{AC}(ABAx_n)\subset F$ and then $ABAx\in Y_{AC}(F)$. Since $ABAx_n$ converges to $ABAx$ and $Y_{AC}(F)$ is closed by assumption, then $ABAx\in Y_{AC}(F)$. Hence $\sigma_{AC}(ABAx)\subset F$. Since $0\in F$, then it follows from Theorem \ref{thm10}, part i), that $\sigma_{BA}(x)\subset F$. Therefore $x$ is in $X_{BA}(F)$ and $X_{BA}(F)$ is closed.

For the converse implication, if $X_{BA}(F)$ is closed then it follows from \cite[Lemma 2.4]{ZZ2} that $Y_{AB}(F)$ is closed. By the same above argument we prove that $X_{CA}(F)$ is closed. Hence by \cite[Lemma 2.4]{ZZ2}, $Y_{AC}(F)$ is closed
\end{proof}

\begin{lem}\label{lem11} Let $A\in\mathcal{L}(X,Y)$ and $B,C\in\mathcal{L}(Y,X)$ such that $A(BA)^2=ABACA=ACABA=(AC)^2A$ and $AC$ has the SVEP. Let $F$ be a closed subset of $\mathbb{C}$ such that $0\notin F$. \\
\indent i) If $Y_{AC}(F\cup\{0\})$ is closed then $X_{BA}(F)$ is closed.\\
\indent ii) If $X_{BA}(F\cup\{0\})$ is closed then $Y_{AC}(F)$ is closed.
\end{lem}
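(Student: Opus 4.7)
My plan is to mirror the argument of Lemma \ref{lem10}, using Theorem \ref{thm10}(i) (respectively (iii)) to transfer a convergent sequence between local spectral subspaces; the novelty is that because $0\notin F$, each application of Theorem \ref{thm10} introduces a stray $\{0\}$, and the SVEP hypothesis (which, by Proposition \ref{prop1}, is shared between $AC$ and $BA$) is what allows me to discard it.

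For part (i), I take $(x_n)_n\subseteq X_{BA}(F)$ with $x_n\to x$. Theorem \ref{thm10}(i) gives $\sigma_{AC}(ABAx_n)\subseteq\sigma_{BA}(x_n)\subseteq F\subseteq F\cup\{0\}$, so $ABAx_n\in Y_{AC}(F\cup\{0\})$. By continuity of $ABA$ and closedness of $Y_{AC}(F\cup\{0\})$, the limit $ABAx$ also lies there, i.e., $\sigma_{AC}(ABAx)\subseteq F\cup\{0\}$. A second application of Theorem \ref{thm10}(i) then yields $\sigma_{BA}(x)\subseteq\sigma_{AC}(ABAx)\cup\{0\}\subseteq F\cup\{0\}$. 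The remaining task is to exclude $0$ from $\sigma_{BA}(x)$: picking an open disc $U$ around $0$ with $\overline{U}\cap F=\emptyset$, SVEP furnishes a unique analytic local resolvent $f_n:\mathbb{C}\setminus F\to X$ for each $x_n$ (hence analytic on $\overline{U}$) and a unique $f:\mathbb{C}\setminus(F\cup\{0\})\to X$ for $x$ (analytic on $U\setminus\{0\}$). Combining $x_n\to x$, SVEP uniqueness, and the Laurent expansion of $f$ around $0$, I would argue that the principal part of $f$ at $0$ vanishes, so $f$ extends analytically across $0$ and $0\notin\sigma_{BA}(x)$.

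Part (ii) is symmetric, with Theorem \ref{thm10}(iii) in place of (i): take $(y_n)_n\subseteq Y_{AC}(F)$ with $y_n\to y$, obtain $BACACy_n\in X_{BA}(F\cup\{0\})$, pass to the limit via the closedness hypothesis to get $\sigma_{BA}(BACACy)\subseteq F\cup\{0\}$, apply Theorem \ref{thm10}(iii) once more to deduce $\sigma_{AC}(y)\subseteq F\cup\{0\}$, and finally exclude $0$ by the same SVEP-based removable singularity argument (this time directly using SVEP of $AC$).

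The main obstacle is the final step in each part: SVEP gives uniqueness of local resolvent functions but not their automatic continuous dependence on the vector, so the Laurent/removable singularity analysis requires genuine care (most likely a uniform bound for the $f_n$'s on a compact subset of $U$ followed by a Montel-type extraction and SVEP matching). An alternative route that sidesteps the singularity analysis would be to invoke an analogue of \cite[Lemma 2.4]{ZZ2} tailored to the $0\notin F$ setting with SVEP, chaining the closedness of $X_{BA}(F)$, $Y_{AB}(F\cup\{0\})$, $X_{CA}(F)$, and $Y_{AC}(F\cup\{0\})$ through the pairs $(A,B)$ and $(A,C)$, as in the proof of Lemma \ref{lem10}.
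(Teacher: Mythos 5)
Your first half is sound: the sequence transfer via Theorem \ref{thm10}(i) (resp.\ (iii)) and the closedness hypothesis correctly yields $\sigma_{BA}(x)\subseteq F\cup\{0\}$ for the limit $x$. But the step you yourself flag as the ``main obstacle'' --- excluding $0$ from $\sigma_{BA}(x)$ --- is a genuine gap, and the route you sketch for it does not work. A uniform bound for the local resolvents $f_n$ on a compact subset of $U$ is exactly what SVEP does \emph{not} provide: the assignment $x_n\mapsto f_n$ is not continuous in general (if it were, SVEP would imply Dunford's property $(\mathcal{C})$, which is false), so there is no normal family to which a Montel extraction could be applied, and hence no way to control the principal part of the Laurent expansion of $f$ at $0$ by comparison with the $f_n$. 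The pointwise removable-singularity analysis cannot be completed from the stated hypotheses.

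The paper avoids this entirely: since $0\in F\cup\{0\}$, Lemma \ref{lem10} applies to the closed set $F\cup\{0\}$ and gives that $X_{BA}(F\cup\{0\})$ is closed; then, because $BA$ inherits the SVEP from $AC$ (Proposition \ref{prop1}), the single-operator result \cite[Lemma 2.5]{ZZ2} removes the point $0$, yielding that $X_{BA}(F)$ is closed. The mechanism behind that cited lemma is not a singularity analysis but a Riesz decomposition of the restriction of $BA$ to the \emph{closed} invariant subspace $X_{BA}(F\cup\{0\})$, using that $0$ has positive distance from $F$, so that $X_{BA}(F)$ appears as a complemented (hence closed) spectral summand. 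Your closing remark about chaining an analogue of the lemmas of \cite{ZZ2} is essentially this correct route, but as written it is only an unexecuted alternative; to repair the proof you should replace the Laurent/Montel step by the reduction to Lemma \ref{lem10} applied to $F\cup\{0\}$ followed by \cite[Lemma 2.5]{ZZ2}.
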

\begin{proof}
i) Assume that $Y_{AC}(F\cup\{0\})$ is closed. Since $0\in F\cup\{0\}$ then it follows from Lemma \ref{lem10} that $X_{BA}(F\cup\{0\})$ is closed. Since $BA$ has the SVEP by Theorem \ref{thm1}, then it follows from  \cite[Lemma 2.5]{ZZ2} that $X_{BA}(F)$ is closed.

ii) It follows similarly.

\end{proof}

\begin{thm} Let $A\in\mathcal{L}(X,Y)$ and $B,C\in\mathcal{L}(Y,X)$ such that $A(BA)^2=ABACA=ACABA=(AC)^2A$. Then $AC$ has the Dunford's property $(\mathcal{C})$ if and only if $BA$ has the Dunford's property $(\mathcal{C})$.
\end{thm}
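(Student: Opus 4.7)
The plan is to combine the SVEP transfer from Proposition \ref{prop1} with the two preparatory Lemmas \ref{lem10} and \ref{lem11}, splitting according to whether the closed set $F\subseteq\mathbb{C}$ contains $0$ or not.

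First I would set up the forward direction. Assume $AC$ has Dunford's property $(\mathcal{C})$. Since $(\mathcal{C})\Rightarrow\mathrm{SVEP}$ and the SVEP is shared by $AC$ and $BA$ (Proposition \ref{prop1}), both $AC$ and $BA$ have the SVEP; this is exactly the hypothesis needed to invoke Lemma \ref{lem11}. To prove $(\mathcal{C})$ for $BA$, fix an arbitrary closed set $F\subseteq\mathbb{C}$ and split into two cases. If $0\in F$, then $Y_{AC}(F)$ is closed by the $(\mathcal{C})$ hypothesis on $AC$, so Lemma \ref{lem10} immediately yields that $X_{BA}(F)$ is closed. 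If $0\notin F$, apply the $(\mathcal{C})$ property of $AC$ to the closed set $F\cup\{0\}$: this gives $Y_{AC}(F\cup\{0\})$ closed, and then part i) of Lemma \ref{lem11} (which needs SVEP of $AC$) yields that $X_{BA}(F)$ is closed. In both cases $X_{BA}(F)$ is closed, so $BA$ has property $(\mathcal{C})$.

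The converse is entirely symmetric. Assuming $(\mathcal{C})$ for $BA$, we first obtain SVEP for $BA$ and hence, by Proposition \ref{prop1}, for $AC$. For a closed set $F\subseteq\mathbb{C}$ we again split: if $0\in F$, Lemma \ref{lem10} transfers closedness of $X_{BA}(F)$ to closedness of $Y_{AC}(F)$; if $0\notin F$, apply $(\mathcal{C})$ of $BA$ to $F\cup\{0\}$ and use part ii) of Lemma \ref{lem11} to deduce that $Y_{AC}(F)$ is closed.

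There is essentially no obstacle here — everything has been prearranged by the previous lemmas. The only point that requires some care is the case $0\notin F$: one cannot hope to transfer closedness of $X_{BA}(F)$ to $Y_{AC}(F)$ directly, which is why we first enlarge $F$ to $F\cup\{0\}$ (to land in the setting of Lemma \ref{lem10}) and then use SVEP to strip $0$ off again via Lemma \ref{lem11}. Because $F$ was arbitrary closed in $\mathbb{C}$ and $(\mathcal{C})$ is exactly the statement that $X_T(F)$ is closed for every such $F$, the two cases together close the argument.
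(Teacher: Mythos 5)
Your proposal is correct and is exactly the argument the paper intends: the paper's proof is the one-line "It follows at once from Lemma \ref{lem10} and Lemma \ref{lem11}," and your case split on whether $0\in F$, together with the observation that property $(\mathcal{C})$ supplies the SVEP hypothesis needed for Lemma \ref{lem11} (via Proposition \ref{prop1} in the converse direction), is precisely the elaboration of that one-liner. No differences worth noting.
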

\begin{proof} It follows at once from Lemma \ref{lem10} and Lemma \ref{lem11}.

\end{proof}

The analytical core of $T$ is the set $K(T)$ of all $x\in X$ such that there exist  a constant $c>0$ and a sequence $(x_n)_n\subset X$  such that $$x_0=x, Tx_n=x_{n-1}\mbox { and }\|x_n\|\leq c^n\|x\| \mbox{ for all }n\in\mathbb{N}.$$ Recall that (see for instance, \cite[Theorem 2.18]{Ai1} or \cite[Proposition 3.3.7]{LN}) that 
$$K(T-\lambda)=X_T(\mathbb{C}\setminus\{\lambda\})=\{x\in X\,:\,\lambda\notin\sigma_T(x)\}.$$
The analytic core was studied by Mbekhta \cite{Mb1,Mb2}. In general, $K(T)$ is not need to be closed. By virtue of \cite[Corollary 6]{MMN2}, for any non-invertible decomposable operator $T$, the point $0$ is isolated in $\sigma(T)$ exactly when $K(T)$ is closed. In particular, if $T$ is a compact operator, or more generally a Riesz operators, then $K(T)$ is closed precisely when $T$ has finite spectrum, \cite[Corollary 9]{MMN2}.

\begin{thm}\label{thmco1} Let $A\in\mathcal{L}(X,Y)$ and $B,C\in\mathcal{L}(Y,X)$ such that $A(BA)^2=ABACA=ACABA=(AC)^2A$. Then for all nonzero complex $\lambda$, $K(AC-\lambda)$ is closed if and only if $K(BA-\lambda)$ is closed.
\end{thm}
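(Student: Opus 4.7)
The plan is to translate the closedness of $K(T-\lambda)$ into closedness of a local spectral subspace via the identity $K(T-\lambda)=X_T(\mathbb{C}\setminus\{\lambda\})=\{x:\lambda\notin\sigma_T(x)\}$, and then transport vectors between $X$ and $Y$ using the intertwining maps $ABA:X\to Y$ and $BACAC:Y\to X$ supplied by Theorem \ref{thm10}. The hypothesis $\lambda\neq 0$ is crucial because the inclusions in Theorem \ref{thm10} differ from equalities only by the singleton $\{0\}$, which $\lambda$ avoids; hence both inclusions in parts (i) and (iii) collapse to the biconditionals
\[
\lambda\notin\sigma_{BA}(x)\iff\lambda\notin\sigma_{AC}(ABAx),\qquad \lambda\notin\sigma_{AC}(y)\iff\lambda\notin\sigma_{BA}(BACACy).
\]

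For the forward direction, assume $K(AC-\lambda)$ is closed and take a sequence $(x_n)_n\subset K(BA-\lambda)$ converging to $x\in X$. The first biconditional above shows $ABAx_n\in K(AC-\lambda)$. Because $ABA\in\mathcal{L}(X,Y)$ is continuous, $ABAx_n\to ABAx$ in $Y$, and closedness of $K(AC-\lambda)$ forces $ABAx\in K(AC-\lambda)$, i.e.\ $\lambda\notin\sigma_{AC}(ABAx)$. Applying the biconditional in the other direction gives $\lambda\notin\sigma_{BA}(x)$, so $x\in K(BA-\lambda)$.

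For the converse, assume $K(BA-\lambda)$ is closed and take $(y_n)_n\subset K(AC-\lambda)$ converging to $y\in Y$. Using part (iii) of Theorem \ref{thm10} instead, $BACACy_n\in K(BA-\lambda)$, and continuity of $BACAC\in\mathcal{L}(Y,X)$ together with closedness of $K(BA-\lambda)$ yields $\lambda\notin\sigma_{BA}(BACACy)$, hence $\lambda\notin\sigma_{AC}(y)$ and $y\in K(AC-\lambda)$.

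There is no real obstacle: both directions reduce to the same template of (a) identifying the correct bounded intertwiner, (b) invoking the appropriate part of Theorem \ref{thm10} to move membership between $K(BA-\lambda)$ and $K(AC-\lambda)$, and (c) using $\lambda\neq 0$ to eliminate the spurious $\{0\}$ from the inclusions. The only subtle point worth flagging is that one must apply parts (i) and (iii) of Theorem \ref{thm10} in opposite directions for the two implications, rather than (i) alone; this mirrors the strategy used in the proof of Lemma \ref{lem10}, where closedness is transported through the auxiliary operators $AB$ and $CA$.
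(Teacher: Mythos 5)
Your proof is correct, and the forward direction coincides with the paper's: transport the sequence through the bounded map $ABA$, use the first inclusion of Theorem \ref{thm10}(i) to land in $K(AC-\lambda)$, pass to the limit by closedness, and use the second inclusion together with $\lambda\neq 0$ to come back. Where you diverge is the converse. The paper does not use part (iii) at all there; instead it quotes \cite[Corollary~3.3]{ZZ2} to pass from closedness of $K(BA-\lambda)$ to closedness of $K(AB-\lambda)$, and then reruns the forward argument with the pair $(CA, AB)$ via part (ii), finally returning to $AC$ by the same external result. Your version transports a sequence $(y_n)\subset K(AC-\lambda)$ directly through the single intertwiner $BACAC$ and invokes the two inclusions of part (iii) in the same way as part (i) was used in the forward direction. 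This is a genuine (if modest) improvement in economy: it keeps the whole proof inside Theorem \ref{thm10} and avoids the appeal to \cite{ZZ2}, at the cost only of carrying the longer word $BACAC$. Both arguments are sound; the equivalences you extract from the inclusions are exactly the correct use of the hypothesis $\lambda\neq 0$, and your limit passage is the standard one. (Incidentally, your write-up also silently corrects a typo in the paper's proof, which at one point writes $\sigma_{AC}(ABAx)\subset\mathbb{C}\setminus\{0\}$ where $\mathbb{C}\setminus\{\lambda\}$ is meant.)
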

\begin{proof} Assume that $K(AC-\lambda)$ is closed and let $(x_n)_n$ be a sequence in $K(BA-\lambda)$ such that $(x_n)_n$ converges to $x\in X$. Since $(x_n)\subset K(BA-\lambda)$ then for all nonnegative integer $n$, $\sigma_{BA}(x_n)\subset\mathbb{C}\setminus\{\lambda\}$. Hence it follows from Theorem \ref{thm10} that $\sigma_{AC}(ABAx_n)\subset\mathbb{C}\setminus\{\lambda\}$. Thus the sequence $(ABAx_)$ belongs to $K(AC-\lambda)$. Since $ABAx_n\longrightarrow ABAx$ and $K(AC-\lambda)$ is closed, then $ABAx\in K(AC-\lambda)$ and so $\sigma_{AC}(ABAx)\subset\mathbb{C}\setminus\{0\}$. We deduce from Theorem \ref{thm10} that $\sigma_{BA}(x)\subset\mathbb{C}\setminus\{\lambda\}$, i.e, $x\in K(BA-\lambda)$. Therefore $K(BA-\lambda)$ is closed.

Since $K(BA-\lambda)$ is closed if and only if $K(AB-\lambda)$ is closed (\cite[Corollary 3.3]{ZZ2}), then with the same argument we can prove the reverse implication.
\end{proof}

Associated with $T\in\mathcal{L}(X)$ there is another linear subspace of $X$, the {\it quasinilpotent part} $H_0(T)$ of $T$ defined as $$H_0(T)=\{x\in X\,:\,\displaystyle{\lim_{n\rightarrow\infty}}\|T^nx\|^{1\over n}=0\}.$$In general, $H_0(T)$ is not need to be closed. By \cite[Proposition 3.3.13]{LN}, $$H_0(T)=\mathcal{X}_T(\{0\}).$$ 
\begin{thm}\label{thmq1}  Let $A\in\mathcal{L}(X,Y)$ and $B,C\in\mathcal{L}(Y,X)$ such that $A(BA)^2=ABACA=ACABA=(AC)^2A$. Then $H_0(AC)$ is closed if and only if $H_0(BA)$ is closed.
\end{thm}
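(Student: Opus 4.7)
The plan is to reduce closedness to preimages under bounded linear maps, by producing two element-wise equivalences that realize $H_0(BA)$ and $H_0(AC)$ as preimages of one another.

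First I would establish two operator identities from the four-term hypothesis: $(AC)^k(ABAx)=A(BA)^{k+1}x$ for every $x\in X$ and every $k\geq 0$, and $A(BA)^m(Cy)=(AC)^{m+1}y$ for every $y\in Y$ and every $m\geq 2$. Both follow by short inductions. For the first, the $k=1$ case is the hypothesis $ACABA=A(BA)^2$, and the inductive step uses the consequence $ACA\cdot BA=A(BA)^2$. The second reduces to $A(BA)^n=(AC)^nA$ for $n\geq 2$, which follows from the identity $AC(ABA-ACA)=0$ obtained by rearranging $ACABA=(AC)^2A$.

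Using the first identity, the estimate $\|(AC)^k(ABAx)\|=\|A(BA)^{k+1}x\|\leq\|A\|\cdot\|(BA)^{k+1}x\|$ gives one direction of $x\in H_0(BA)\iff ABAx\in H_0(AC)$, and the reverse estimate $\|(BA)^{k+2}x\|\leq\|B\|\cdot\|A(BA)^{k+1}x\|=\|B\|\cdot\|(AC)^k(ABAx)\|$ gives the other, after the elementary manipulation that converts convergence of $\|\cdot\|^{1/k}$ into convergence of $\|\cdot\|^{1/(k+2)}$. A symmetric argument based on the second identity yields $y\in H_0(AC)\iff Cy\in H_0(BA)$.

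These two equivalences say precisely that $H_0(BA)=(ABA)^{-1}(H_0(AC))$ and $H_0(AC)=C^{-1}(H_0(BA))$. Since $ABA$ and $C$ are continuous, the preimage of a closed set is closed, and both directions of the theorem follow. I do not foresee a serious obstacle; the main technical point is verifying the two operator identities, after which everything reduces to an exponent shift and the continuity of $ABA$ and $C$. An additional benefit of this approach is that it bypasses the distinction between the local spectral subspace $X_T(\{0\})$ and the glocal spectral subspace $\mathcal{X}_T(\{0\})=H_0(T)$, which would complicate a direct adaptation of Lemma \ref{lem10} or Theorem \ref{thmco1}.
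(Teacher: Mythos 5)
Your proof is correct. For the forward direction it rests on the same intertwining identity that drives the paper's argument --- your $(AC)^k(ABAx)=A(BA)^{k+1}x$ is the paper's $\|(AC)^pABAx\|=\|(AB)^{p+1}Ax\|$ in slightly different notation --- together with the same elementary exponent-shift from $\|\cdot\|^{1/p}$ to $\|\cdot\|^{1/(p+1)}$; the only difference is that you package the resulting element-wise equivalence $x\in H_0(BA)\Longleftrightarrow ABAx\in H_0(AC)$ as the preimage statement $H_0(BA)=(ABA)^{-1}\bigl(H_0(AC)\bigr)$ and invoke continuity, rather than running the paper's sequence-chasing argument; that is a cosmetic (if cleaner) repackaging. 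Where you genuinely diverge is the converse implication: the paper disposes of it with ``goes similarly,'' which implicitly routes through $H_0(AB)$ and $H_0(CA)$ and the known transfer of closedness between $H_0(AB)$ and $H_0(BA)$, whereas you give a self-contained second identity $A(BA)^m(Cy)=(AC)^{m+1}y$ for $m\ge 2$ (valid because $ACABA=(AC)^2A$ forces $A(BA)^n=(AC)^nA$ for all $n\ge 2$) and realize $H_0(AC)$ directly as $C^{-1}\bigl(H_0(BA)\bigr)$. Both identities check out, the restriction $m\ge 2$ is harmless since $H_0$ only sees the tail of the power sequence, and your version of the converse is more explicit than the paper's; the two proofs buy the same theorem, with yours slightly more transparent and the paper's slightly more economical in its reuse of the $AB$ versus $BA$ machinery.
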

\begin{proof} Assume that $H_0(AC)$ is closed  and let $(x_n)$ be a sequence in $H_0(BA)$ which converges to $x\in X$. It is easy to see that $ABAx_n$ belongs to $H_0(AC)$. Since $H_0(AC)$ is closed and $ABAx_n$ converges to $ABAx$, then $ABAx\in H_0(AC)$. From the equality  
$$\|(AC)^pABAx\|^{1\over p}=\|(AB)^{p+1}Ax\|^{1\over p}=(\|(AB)^{p+1}Ax\|^{1\over p+1})^{p\over p+1}$$
we have $Ax\in H_0(AB)$. Since
\[\begin{array}{ccc}\|(BA)^{p+1}x\|^{1\over p+1}&=&\|B(AB)^{p}Ax\|^{1\over p+1}\\
&\leq &\|B\|^{1\over p+1}(\|(AB)^{p}Ax\|^{1\over p})^{p\over p+1}\\
&\leq&M^{1\over p+1}(\|(AB)^{p}Ax\|^{1\over p})^{p\over p+1}\quad\mbox{ where } M=\max(\|B\|,1);
\end{array}\] 
then $x\in H_0(BA)$ and so $H_0(BA)$ is closed.

The reverse implication goes similarly.\end{proof}
\section{Applications and concluding remarks}

A weaker version of property $(\delta)$ can be given as follows (see  \cite{ZZg} for a localized version): an operator  $T$ is said to have the weak spectral property $(\delta_w)$ provided that for every finite open
 cover $\{U_1,\ldots,U_n\}$ of $\mathbb{C},$  we have
 \begin{equation}{\mathcal X}_T(\overline{U_1})+\cdots+{\mathcal X}_T(\overline{ U_n})\mbox{ is dense in }X.
 \end{equation}

\begin{prop}\label{weak} Let $A\in\mathcal{L}(X,Y)$ and $B,C\in\mathcal{L}(Y,X)$ such that $A(BA)^2=ABACA=ACABA=(AC)^2A$. Assume that $ABA$ and $ACA$ have dense ranges.\\
i) If $BA$ has the  property $(\delta_w)$ then $AC$ has the  property $(\delta_w)$.\\
ii)  If $CA$ has the  property $(\delta_w)$ then $AB$ has the  property $(\delta_w)$.
\end{prop}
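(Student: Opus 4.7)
The plan is to transfer property $(\delta_w)$ from $BA$ to $AC$ by pushing the glocal spectral subspaces of $BA$ forward through the operator $ABA\in\mathcal{L}(X,Y)$. The engine of the argument is the intertwining identity
\begin{equation*}
ABA(BA-\mu)=(AC-\mu)ABA,
\end{equation*}
which is immediate from $ABA\cdot BA=A(BA)^2=ACABA=AC\cdot ABA$. Note the same identity is what was already exploited in Theorem \ref{thm10}(i), so this is really a ``global'' version of that local spectral inclusion.

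For part (i), fix any closed $F\subseteq\mathbb{C}$ and let $x\in\mathcal{X}_{BA}(F)$; by definition there is an analytic $f\colon\mathbb{C}\setminus F\to X$ with $(BA-\mu)f(\mu)=x$ for all $\mu\notin F$. Applying $ABA$ and using the intertwining yields $(AC-\mu)\,ABAf(\mu)=ABAx$ for all $\mu\notin F$, and since $ABAf$ is $Y$-valued analytic on $\mathbb{C}\setminus F$, we conclude $ABAx\in\mathcal{Y}_{AC}(F)$. Hence $ABA\bigl(\mathcal{X}_{BA}(F)\bigr)\subseteq\mathcal{Y}_{AC}(F)$ for every closed $F\subseteq\mathbb{C}$. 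Now fix an arbitrary finite open cover $\{U_1,\ldots,U_n\}$ of $\mathbb{C}$ and set $S=\mathcal{X}_{BA}(\overline{U_1})+\cdots+\mathcal{X}_{BA}(\overline{U_n})$. Property $(\delta_w)$ for $BA$ makes $S$ dense in $X$, and applying the previous inclusion summandwise gives
\begin{equation*}
ABA(S)\subseteq\mathcal{Y}_{AC}(\overline{U_1})+\cdots+\mathcal{Y}_{AC}(\overline{U_n}).
\end{equation*}
Continuity of $ABA$ makes $ABA(S)$ dense in $\mathcal{R}(ABA)$, and combined with the hypothesis that $\mathcal{R}(ABA)$ is dense in $Y$, the right-hand side is dense in $Y$. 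This is precisely property $(\delta_w)$ for $AC$.

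Part (ii) is obtained from the same scheme with the roles of $B$ and $C$ interchanged; the required intertwining is $ACA(CA-\mu)=(AB-\mu)ACA$, which follows from the identity $ACA\cdot CA=A(CA)^2=ACACA=(AC)^2A=ACABA=ABACA=AB\cdot ACA$ obtained from the full five-term hypothesis. Together with the density of $\mathcal{R}(ACA)$ in $Y$, the same two-step argument transfers $(\delta_w)$ from $CA$ to $AB$. There is no real obstacle here; the only point demanding a touch of attention is verifying that the two intertwinings actually hold, i.e.\ unpacking the equality $A(BA)^2=ABACA=ACABA=(AC)^2A$ (and its rearrangement involving $A(CA)^2$) in the correct way to match the products $ABA\cdot BA$ and $AC\cdot ABA$ on one hand, and $ACA\cdot CA$ and $AB\cdot ACA$ on the other.
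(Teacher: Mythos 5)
Your argument is correct and is essentially the paper's own proof: both rest on the intertwining $ABA(BA-\mu)=(AC-\mu)ABA$ to get $ABA\bigl(\mathcal{X}_{BA}(\overline{U_i})\bigr)\subseteq\mathcal{Y}_{AC}(\overline{U_i})$, and then use continuity of $ABA$ together with the density of $\mathcal{R}(ABA)$ to conclude. You merely spell out the inclusion and the density step that the paper labels ``easy to see,'' and your verification of the analogous intertwining $ACA(CA-\mu)=(AB-\mu)ACA$ for part (ii) is accurate.
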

\begin{proof} We only prove i). Assume that $BA$ has the weak property $(\delta_w)$. Let  $\{U_1,\ldots,U_n\}$ be a finite open cover of $\mathbb{C}.$  Then
 $${\mathcal X}_{BA}(\overline{U_1})+\cdots+{\mathcal X}_{BA}(\overline{ U_n})\mbox{ is dense in }X.
 $$
 It is easy to see that $ABA({\mathcal X}_{BA}(\overline{U_i}))\subseteq {\mathcal Y}_{AC}(\overline{U_i})$ for each $i$, $i=1,\ldots,n$. Since $ABA(X)$ is dense in $Y$, then it follows that $${\mathcal Y}_{AC}(\overline{U_1})+\cdots+{\mathcal Y}_{AC}(\overline{ U_n})\mbox{ is dense in }Y.$$
 Thus $AC$ has the property $(\delta_w)$.
\end{proof}
We do not know if we can drop the condition that $ABA$ and $ACA$ have dense ranges in the last proposition.

\begin{prop}\label{p1.3} Let $A\in\mathcal{L}(X,Y)$ and $B,C\in\mathcal{L}(Y,X)$ such that $A(BA)^2=ABACA=ACABA=(AC)^2A$. Then
$$AC-I\mbox{ is surjective if and only if }BA-I\mbox{ is surjective.}$$
In other word, $$\sigma_s(AC)\cup\{0\}=\sigma_s(BA)\cup\{0\}.$$
\end{prop}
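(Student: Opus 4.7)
The plan is to establish the spectral identity $\sigma_s(AC)\cup\{0\}=\sigma_s(BA)\cup\{0\}$ first, and then deduce the equivalence for $AC-I$ and $BA-I$ by specialization. In fact the two assertions of the proposition are equivalent: $T-\lambda$ is surjective precisely when $\lambda\notin\sigma_s(T)$, so surjectivity of $AC-I$ (respectively $BA-I$) corresponds to $1\notin\sigma_s(AC)$ (respectively $1\notin\sigma_s(BA)$), and since $1\neq 0$ this is controlled by the surjective spectrum modulo $\{0\}$.

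The key tool is the identity $\sigma_s(T)=\bigcup_{x}\sigma_T(x)$ recalled in the introduction from \cite{LN}, together with Theorem \ref{thm10}. For the inclusion $\sigma_s(BA)\subseteq\sigma_s(AC)\cup\{0\}$, I would take any $x\in X$ and apply part i) of Theorem \ref{thm10} to obtain $\sigma_{BA}(x)\subseteq\sigma_{AC}(ABAx)\cup\{0\}$; since $ABAx\in Y$ we have $\sigma_{AC}(ABAx)\subseteq\sigma_s(AC)$, so $\sigma_{BA}(x)\subseteq\sigma_s(AC)\cup\{0\}$. Taking the union over $x\in X$ gives the claim. The reverse inclusion $\sigma_s(AC)\subseteq\sigma_s(BA)\cup\{0\}$ is obtained symmetrically from part iii): for each $y\in Y$, $\sigma_{AC}(y)\subseteq\sigma_{BA}(BACACy)\cup\{0\}\subseteq\sigma_s(BA)\cup\{0\}$, and then one takes the union over $y\in Y$.

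Combining these two inclusions produces $\sigma_s(AC)\cup\{0\}=\sigma_s(BA)\cup\{0\}$, and evaluating at $\lambda=1$ (which differs from $0$) yields $AC-I$ surjective if and only if $BA-I$ surjective. I do not expect any genuine obstacle here, since Theorem \ref{thm10} already encapsulates the algebraic content of the hypothesis $A(BA)^2=ABACA=ACABA=(AC)^2A$; what remains is essentially bookkeeping, namely keeping track of which underlying space ($X$ or $Y$) each vector belongs to when assembling the surjective spectra as unions of local spectra.
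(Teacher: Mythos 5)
Your proof is correct and follows essentially the same route as the paper: both arguments combine the identity $\sigma_s(T)=\bigcup_x\sigma_T(x)$ with the two-sided local-spectrum inclusions of Theorem \ref{thm10} to get $\sigma_s(AC)\cup\{0\}=\sigma_s(BA)\cup\{0\}$. If anything, you are slightly more careful than the paper in matching each inclusion to the correct part of Theorem \ref{thm10} (part i) yields $\sigma_s(BA)\subseteq\sigma_s(AC)\cup\{0\}$ and part iii) the reverse) and in tracking whether the vectors live in $X$ or $Y$.
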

\begin{proof}  Since for every bounded linear operator $T$ the surjective spectrum satisfies $\sigma_s(T)=\displaystyle{\bigcup_{x\in X}}\sigma_T(x)$, then it follows from Theorem \ref{thm10}, part i), that $\sigma_{s}(AC)\subseteq\sigma_{s}(BA)\cup\{0\}$. Also by Theorem \ref{thm10}, part iii), we get $\sigma_{s}(BA)\subseteq\sigma_{s}(AC)\cup\{0\}$. Thus, 
$$\sigma_s(AC)\cup\{0\}=\sigma_s(BA)\cup\{0\}.$$
\end{proof}

It is well known that for $T\in\mathcal{L}(X)$ we have $\sigma_a(T)=\sigma_s(T^*)$ and $\sigma_s(T)=\sigma_a(T^*)$ (see for instance, \cite[Proposition 1.3.1]{LN}). Then it follows from Proposition \ref{p1.3}:
\begin{prop}\label{p1.1} Let $A\in\mathcal{L}(X,Y)$ and $B,C\in\mathcal{L}(Y,X)$ such that $A(BA)^2=ABACA=ACABA=(AC)^2A$. Then
$$AC-I\mbox{ is bounded below if and only if }BA-I\mbox{ is bounded below.}$$
In other word, $$\sigma_a(AC)\setminus\{0\}=\sigma_a(BA)\setminus\{0\}.$$
\end{prop}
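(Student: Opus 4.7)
The plan is to reduce this proposition to Proposition \ref{p1.3} by passing to adjoints and using the standard duality $\sigma_a(T)=\sigma_s(T^*)$ for $T\in\mathcal{L}(Z)$, which identifies approximate point spectrum on $Z$ with surjective spectrum on $Z^*$. Since ``bounded below'' at $\lambda=1$ is equivalent to $1\notin\sigma_a(\cdot)$, the set-equality $\sigma_a(AC)\setminus\{0\}=\sigma_a(BA)\setminus\{0\}$ will immediately yield the first sentence of the proposition.

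The key bookkeeping step is to verify that the adjoints $A^{*}\in\mathcal{L}(Y^{*},X^{*})$ and $B^{*},C^{*}\in\mathcal{L}(X^{*},Y^{*})$ fit the hypothesis of Proposition \ref{p1.3}, after a harmless relabelling. Taking adjoints in the five-factor identity $ABABA=ABACA=ACABA=ACACA$ and using $(XYZ)^{*}=Z^{*}Y^{*}X^{*}$, I obtain
\[
A^{*}(B^{*}A^{*})^{2}=A^{*}C^{*}A^{*}B^{*}A^{*}=A^{*}B^{*}A^{*}C^{*}A^{*}=(A^{*}C^{*})^{2}A^{*}.
\]
Setting $\tilde A:=A^{*}$, $\tilde B:=C^{*}$, $\tilde C:=B^{*}$ (note the swap of $B^*$ and $C^*$), this reads exactly $\tilde A(\tilde B\tilde A)^{2}=\tilde A\tilde B\tilde A\tilde C\tilde A=\tilde A\tilde C\tilde A\tilde B\tilde A=(\tilde A\tilde C)^{2}\tilde A$, so Proposition \ref{p1.3} applies to $(\tilde A,\tilde B,\tilde C)$.

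Applying that proposition and computing $\tilde A\tilde C=A^{*}B^{*}=(BA)^{*}$ and $\tilde B\tilde A=C^{*}A^{*}=(AC)^{*}$ gives
\[
\sigma_{s}((BA)^{*})\cup\{0\}=\sigma_{s}((AC)^{*})\cup\{0\},
\]
and the duality $\sigma_{s}(T^{*})=\sigma_{a}(T)$ then yields $\sigma_a(AC)\cup\{0\}=\sigma_a(BA)\cup\{0\}$, equivalently $\sigma_a(AC)\setminus\{0\}=\sigma_a(BA)\setminus\{0\}$. The bounded-below equivalence is the particular case $\lambda=1$.

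There is really no hard step here; the only point that requires care is the relabelling $(\tilde B,\tilde C)=(C^{*},B^{*})$, which is forced by the fact that in the original hypothesis $B$ sits in the $\tilde B$-slot (the one multiplied first with $\tilde A$ on the right-hand side of $\tilde A\tilde B\tilde A$), whereas after adjoining, the factor that plays that role is $C^{*}$. Once the adjoint identity is recognised as an instance of the same hypothesis, everything else is formal.
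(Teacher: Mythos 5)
Your proposal is correct and follows essentially the same route as the paper, which derives Proposition \ref{p1.1} from Proposition \ref{p1.3} via the duality $\sigma_a(T)=\sigma_s(T^*)$; your contribution is simply to spell out the adjoint bookkeeping (the swap $(\tilde B,\tilde C)=(C^*,B^*)$), which the paper leaves implicit. The paper additionally records a direct norm-inequality proof in the subsequent Remark, but that is an alternative, not the official argument.
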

\begin{rem}A direct proof of Proposition \ref{p1.1} can be given as follows: Assume that $AC-I$ is bounded below. Then there exists $c>0$ such that $$\|x\|\leq c\|(AC-I)x\|,\,\,\,\forall x\in X.$$
Then for all $x\in X$, $$\begin{array}{ccc}\|ABAx\|&\leq&c\|(AC-I)ABAx\|\\
&=&c\|(ABABA-ABA)x\|\\
&\leq&c\|ABA\|\|(BA-I)x\|.
\end{array}$$
Hence $$\begin{array}{ccc}\|Ax\|&\leq&\|ABAx\|+\|ABAx-Ax\|\\
&\leq&c\|ABA\|\|(BA-I)x\|+\|A\|\|(BA-I)x\|\\
&\leq&c'\|(BA-I)x\| \,\,\,(c'=c\|ABA\|+\|A\|).
\end{array}$$
Thus
$$\begin{array}{ccc}\|x\|&\leq&\|BAx\|+\|(BA-I)x\|\\
&\leq&\|B\|\|Ax\|+\|(BA-I)x\|\\
&\leq&\|B\|c'\|(BA-I)x\|+\|(BA-I)x\|\\
&\leq&c''\|(BA-I)x\|\,\,\,(c''=\|B\|c'+1).
\end{array}$$
Therefore, $BA-I$ is bounded below. The other sense goes similarly.
\end{rem}

From Proposition \ref{p1.3} and Proposition \ref{p1.1} we retrieve the result of \cite[Lemme 3.1]{CS} for bounded linear operators:
\begin{cor}\label{c3.1} Let $A\in\mathcal{L}(X,Y)$ and $B,C\in\mathcal{L}(Y,X)$ such that $A(BA)^2=ABACA=ACABA=(AC)^2A$. Then
$$AC-I\mbox{ is invertible if and only if }BA-I\mbox{ is invertible.}$$
In other word, $$\sigma(AC)\setminus\{0\}=\sigma(BA)\setminus\{0\}.$$
\end{cor}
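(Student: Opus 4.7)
The plan is to combine Propositions~\ref{p1.3} and \ref{p1.1} using the classical characterization that a bounded linear operator $T$ is invertible if and only if $T$ is both bounded below and surjective. First I would observe that $AC-I$ is invertible precisely when it is simultaneously surjective and bounded below. Proposition~\ref{p1.3} gives that $AC-I$ is surjective if and only if $BA-I$ is surjective, and Proposition~\ref{p1.1} gives the analogous equivalence for being bounded below. Chaining these two equivalences yields the first assertion.

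For the spectral identity $\sigma(AC)\setminus\{0\}=\sigma(BA)\setminus\{0\}$, I would use the decomposition $\sigma(T)=\sigma_a(T)\cup\sigma_s(T)$. Proposition~\ref{p1.3} states $\sigma_s(AC)\cup\{0\}=\sigma_s(BA)\cup\{0\}$, which is equivalent to $\sigma_s(AC)\setminus\{0\}=\sigma_s(BA)\setminus\{0\}$; Proposition~\ref{p1.1} gives the corresponding equality for $\sigma_a$. Taking the union of these two identities produces $\sigma(AC)\setminus\{0\}=\sigma(BA)\setminus\{0\}$, and this is equivalent to the invertibility statement once one recalls that for $\lambda\ne 0$, $\lambda\in\sigma(T)$ iff $T-\lambda$ is not invertible.

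Since both propositions are already in hand, there is no genuine obstacle here: the corollary is essentially bookkeeping on top of the surjectivity and bounded-below results. The only subtlety worth flagging in writing is that Propositions~\ref{p1.3} and \ref{p1.1} are phrased slightly differently (one uses $\cup\{0\}$ and the other uses $\setminus\{0\}$), but these are trivially equivalent formulations of ``agreement away from the origin,'' so no rescaling trick or Jacobson-type manipulation is needed to pass to an arbitrary nonzero $\lambda$.
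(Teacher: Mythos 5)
Your proposal is correct and is exactly the paper's argument: the corollary is stated there as an immediate consequence of Propositions~\ref{p1.3} and \ref{p1.1}, via the fact that invertibility is the conjunction of surjectivity and boundedness below (equivalently, $\sigma(T)=\sigma_a(T)\cup\sigma_s(T)$). Your remark that the $\cup\{0\}$ and $\setminus\{0\}$ formulations agree away from the origin is also accurate, so nothing further is needed.
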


\begin{prop}\label{p1.2} Let $A\in\mathcal{L}(X,Y)$ and $B,C\in\mathcal{L}(Y,X)$ such that $A(BA)^2=ABACA=ACABA=(AC)^2A$. Then
$$AC-I\mbox{ is injective if and only if }BA-I\mbox{ is injective.}$$
In other word, $$\sigma_p(AC)\setminus\{0\}=\sigma_p(BA)\setminus\{0\}.$$
\end{prop}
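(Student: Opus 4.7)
The approach is to combine the hypothesized identities with Jacobson-type sharing of injectivity between $AB$ and $BA$ (resp.\ $AC$ and $CA$), which is given by \cite[Proposition 2.1]{BZ}. This reduces the statement to two direct computations with the given operator identities.

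For the direction $(\Rightarrow)$, suppose $AC-I$ is injective and $(BA-I)x = 0$, i.e.\ $BAx = x$. I would apply the identity $A(BA)^2 = ACABA$ to the vector $x$: the left side becomes $ABA(BAx) = ABAx$, while the right side is $ACABAx = (AC)(ABAx)$. Hence $(AC-I)(ABAx) = 0$, and injectivity of $AC-I$ forces $ABAx = 0$. On the other hand $ABAx = A(BAx) = Ax$, so $Ax = 0$, and therefore $x = BAx = B(Ax) = 0$. This yields injectivity of $BA-I$.

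For the direction $(\Leftarrow)$, suppose $BA-I$ is injective. By \cite[Proposition 2.1]{BZ} (Jacobson's lemma for injectivity applied to the pair $(B,A)$), $AB-I$ is also injective. By the same Jacobson reduction applied to $(A,C)$, it suffices to show that $CA-I$ is injective in order to conclude that $AC-I$ is injective. So suppose $(CA-I)x = 0$, i.e.\ $CAx = x$, which gives $ACAx = A(CAx) = Ax$. Now I would use the identity $ABACA = (AC)^2A$ applied to $x$: the left-hand side becomes $AB(ACAx) = AB(Ax) = ABAx$, while the right-hand side is $AC(ACAx) = AC(Ax) = ACAx = Ax$. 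Thus $ABAx = Ax$, so $(AB-I)(Ax) = 0$, and injectivity of $AB-I$ gives $Ax = 0$. Consequently $x = CAx = C(Ax) = 0$, proving $CA-I$ injective.

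The main obstacle is selecting, for each direction, the right identity from the chain $A(BA)^2 = ABACA = ACABA = (AC)^2A$: the forward direction uses $A(BA)^2 = ACABA$, while the reverse direction uses $ABACA = (AC)^2A$. Because the hypothesis is not symmetric under $B\leftrightarrow C$, one cannot simply run the same argument twice, which is why the reverse direction must pass through $AB-I$ and $CA-I$ via Jacobson. The remaining two identities in the chain are not explicitly needed for this particular statement, being absorbed into the Jacobson reductions.
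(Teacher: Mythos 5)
Your proof is correct and follows essentially the same route as the paper: the forward direction is exactly the paper's computation (apply $A(BA)^2=ACABA$ to a vector fixed by $BA$, conclude $ABAx=0$, then $Ax=0$, then $x=0$), and your reverse direction spells out what the paper dismisses as ``goes similarly,'' using the same Jacobson-plus-mirror-argument pattern ($BA\to AB\to CA\to AC$) that the paper uses in Proposition 2.1 and Theorem 2.2. The only cosmetic point is that the injectivity transfer between $AB-I$ and $BA-I$ is the elementary point-spectrum form of Jacobson's lemma rather than literally \cite[Proposition 2.1]{BZ} (which concerns the SVEP), but this does not affect the argument.
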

\begin{proof}
Assume that $AC-I$ is injective. Let $x\in X$ such that $(BA-I)x=0$. Then $BAx=x$. Hence $ABABAx=ACABAx=ABAx$. It follows that $(AC-I)ABAx=0$. Since $AC-I$ is injective we deduce that $ABAx=0$. Then $Ax=0$. Thus $x=0$.

The other implication goes similarly.

\end{proof}

Let $l^\infty(X)$ be the set of all bounded sequences of elements of $X$. Endowed with the norm $\|(x_n)\|=\sup_n\|x_n\|$, $l^\infty(X)$ is a Banach space. For $\tilde{x}=(x_n)\in l^\infty(X)$ let $q(\tilde{x})$ be the infimum of all $\varepsilon>0$ such that the set $\{x_n\,:\,n\in\mathbb{N}\}$ is contained in the union of a finite number of open balls with radius $\varepsilon$. Let $$m(X)=\{\tilde{x}\in l^\infty(X)\,:\,q(\tilde{x})=0\}.$$ For $T\in{\mathcal L}(X)$ let $T^\infty$ be the bounded linear defined on $l^\infty(X)$ by $T^\infty((x_n))=(Tx_n).$ Set $\tilde{X}=l^\infty(X)/m(X)$ and let $\tilde{T}\,:\tilde{X}\rightarrow\tilde{X}$ be the operator defined by $\tilde{T}(\tilde{x}+m(X))=T^\infty\tilde{x}+m(X)$. Then by \cite[Theorem 17.6 and Theorem 17.9]{Muller} we have
$$T\mbox{ is upper semi-Fredholm }\Longleftrightarrow \tilde{T}\mbox{ is injective }\Longleftrightarrow \tilde{T}\mbox{ is bounded bellow}$$ and
$$T\mbox{ is lower semi-Fredholm }\Longleftrightarrow \tilde{T}\mbox{ is surjective }.$$

Now let $A\in\mathcal{L}(X,Y)$ and $B,C\in\mathcal{L}(Y,X)$ such that $A(BA)^2=ABACA=ACABA=(AC)^2A$.  Then 
$$\tilde{A}(\tilde{B}\tilde{A})^2=\tilde{A}\tilde{B}\tilde{A}\tilde{C}\tilde{A}=\tilde{A}\tilde{C}\tilde{A}\tilde{B}\tilde{A}=(\tilde{A}\tilde{C})^2\tilde{A}.$$

As an immediate consequence of Proposition \ref{p1.3} and Proposition \ref{p1.1} we get the following result.

\begin{prop} Let $A\in\mathcal{L}(X,Y)$ and $B,C\in\mathcal{L}(Y,X)$ such that $A(BA)^2=ABACA=ACABA=(AC)^2A$. Then\\
i) $AC-I\mbox{ is upper semi-Fredholm if and only if }BA-I\mbox{ is upper semi-Fredholm;}$\\
ii) $AC-I\mbox{ is lower semi-Fredholm if and only if }BA-I\mbox{ is lower semi-Fredholm}.$\\
In other word, $$\sigma_{uf}(AC)\setminus\{0\}=\sigma_{uf}(BA)\setminus\{0\};$$
$$\sigma_{lf}(AC)\setminus\{0\}=\sigma_{lf}(BA)\setminus\{0\}.$$
\end{prop}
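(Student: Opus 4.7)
The plan is to reduce both parts to Propositions \ref{p1.3} and \ref{p1.1} by applying them to the quotient operators $\tilde{A}$, $\tilde{B}$, $\tilde{C}$ introduced immediately before the statement, and then to combine this with the M\"uller characterization of upper/lower semi-Fredholmness recalled there.

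The first step is to record that $T\mapsto \tilde{T}$ is a unital Banach-algebra homomorphism between the appropriate operator algebras: the componentwise definition of $T^{\infty}$ on $\ell^{\infty}(X)$ and the $T^{\infty}$-invariance of $m(X)$ give $\widetilde{ST}=\tilde{S}\tilde{T}$ and $\tilde{I}=I$, and additivity and scalar-linearity are automatic. In particular, for every $\lambda\in\mathbb{C}$, $\widetilde{AC-\lambda}=\tilde{A}\tilde{C}-\lambda$ and $\widetilde{BA-\lambda}=\tilde{B}\tilde{A}-\lambda$. The relation
$$\tilde{A}(\tilde{B}\tilde{A})^{2}=\tilde{A}\tilde{B}\tilde{A}\tilde{C}\tilde{A}=\tilde{A}\tilde{C}\tilde{A}\tilde{B}\tilde{A}=(\tilde{A}\tilde{C})^{2}\tilde{A}$$
already displayed in the excerpt then says precisely that $(\tilde{A},\tilde{B},\tilde{C})$ satisfies the standing hypothesis of the paper, so every nonzero-spectrum result previously established for $(A,B,C)$ transfers verbatim to $(\tilde{A},\tilde{B},\tilde{C})$.

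For part (i), fix $\lambda\in\mathbb{C}\setminus\{0\}$. By \cite[Theorem 17.6]{Muller}, $AC-\lambda$ is upper semi-Fredholm iff $\tilde{A}\tilde{C}-\lambda$ is bounded below, and the same holds for $BA-\lambda$ and $\tilde{B}\tilde{A}-\lambda$. Proposition \ref{p1.1} applied to the tilde triple gives $\sigma_{a}(\tilde{A}\tilde{C})\setminus\{0\}=\sigma_{a}(\tilde{B}\tilde{A})\setminus\{0\}$, so $\tilde{A}\tilde{C}-\lambda$ is bounded below iff $\tilde{B}\tilde{A}-\lambda$ is. Chaining these three equivalences yields $\sigma_{uf}(AC)\setminus\{0\}=\sigma_{uf}(BA)\setminus\{0\}$. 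Part (ii) proceeds identically, with ``surjective'' and \cite[Theorem 17.9]{Muller} replacing ``bounded below'' and \cite[Theorem 17.6]{Muller}, and with Proposition \ref{p1.3} (giving $\sigma_{s}(\tilde{A}\tilde{C})\cup\{0\}=\sigma_{s}(\tilde{B}\tilde{A})\cup\{0\}$) replacing Proposition \ref{p1.1}.

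I do not expect any genuine obstacle: the whole argument is a direct transfer of the nonzero-spectrum identities already obtained on $(X,Y)$ to the enlarged pair $(\tilde{X},\tilde{Y})$. The only bookkeeping to verify is the multiplicativity and unitality of $T\mapsto\tilde{T}$, which is immediate from the definitions of $T^{\infty}$ and $m(X)$.
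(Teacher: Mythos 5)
Your proposal is correct and follows essentially the same route as the paper: the author also passes to the quotient operators $\tilde{A},\tilde{B},\tilde{C}$, notes that the identity $\tilde{A}(\tilde{B}\tilde{A})^{2}=\tilde{A}\tilde{B}\tilde{A}\tilde{C}\tilde{A}=\tilde{A}\tilde{C}\tilde{A}\tilde{B}\tilde{A}=(\tilde{A}\tilde{C})^{2}\tilde{A}$ is inherited, and then combines M\"uller's characterizations of semi-Fredholmness with Propositions \ref{p1.1} and \ref{p1.3} applied to the tilde triple. You merely spell out the multiplicativity bookkeeping that the paper leaves implicit in calling the result an ``immediate consequence.''
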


\begin{cor} Let $A\in\mathcal{L}(X,Y)$ and $B,C\in\mathcal{L}(Y,X)$ such that $A(BA)^2=ABACA=ACABA=(AC)^2A$. Then
$$AC-I\mbox{ is Fredholm if and only if }BA-I\mbox{ is Fredholm.}$$
In other word, $$\sigma_e(AC)\setminus\{0\}=\sigma_e(BA)\setminus\{0\}.$$
\end{cor}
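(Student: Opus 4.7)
The plan is to derive this corollary directly from the preceding proposition, with essentially no additional work. Recall that an operator $T \in \mathcal{L}(X)$ is Fredholm precisely when it is both upper semi-Fredholm and lower semi-Fredholm, so at the level of spectra
$$\sigma_e(T) = \sigma_{uf}(T) \cup \sigma_{lf}(T).$$

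Applying this identity to both $AC$ and $BA$, and then invoking the two parts of the preceding proposition (which gives $\sigma_{uf}(AC)\setminus\{0\}=\sigma_{uf}(BA)\setminus\{0\}$ and $\sigma_{lf}(AC)\setminus\{0\}=\sigma_{lf}(BA)\setminus\{0\}$), one can remove the point $0$ from both sides and take the union to conclude that $\sigma_e(AC)\setminus\{0\}=\sigma_e(BA)\setminus\{0\}$. The pointwise statement about $AC-I$ and $BA-I$ being Fredholm is then just the specialization $\lambda = 1 \notin \{0\}$.

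There is no real obstacle here: the corollary is purely a bookkeeping consequence of the equivalence Fredholm $\Leftrightarrow$ upper semi-Fredholm $+$ lower semi-Fredholm, combined with the two implications already established in the preceding proposition. In particular, the heavy lifting (the passage through $\tilde{A}$, $\tilde{B}$, $\tilde{C}$ on $\tilde{X}$, $\tilde{Y}$ via the Calkin-type quotient, and the fact that these liftings still satisfy the identity $\tilde{A}(\tilde{B}\tilde{A})^2=\tilde{A}\tilde{B}\tilde{A}\tilde{C}\tilde{A}=\tilde{A}\tilde{C}\tilde{A}\tilde{B}\tilde{A}=(\tilde{A}\tilde{C})^2\tilde{A}$) has already been done upstream, so the proof here should occupy only a few lines.
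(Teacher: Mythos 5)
Your proposal is correct and matches the paper's (implicit) argument exactly: the corollary is stated there without proof as an immediate consequence of the preceding proposition, via the equivalence Fredholm $\Leftrightarrow$ upper semi-Fredholm and lower semi-Fredholm, i.e.\ $\sigma_e(T)=\sigma_{uf}(T)\cup\sigma_{lf}(T)$. Your additional remark that the real work was done upstream through the quotient operators $\tilde{A},\tilde{B},\tilde{C}$ is also faithful to the paper's structure.
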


\begin{ex}\label{Ex1} Let $P$ be a non trivial idempotent on $X$. Let $A$, $B$ and $C$ defined on $X\oplus X\oplus X$ by
\[ 
A = \left(\begin{array}{ccc} 0 & I & 0\\
                                0 & P & 0\\
                                0 & 0 & 0
                                                  \end{array}  \right)\mbox{ et }
B = \left(\begin{array}{ccc} I & 0 & 0\\
                                0 & I & 0\\
                                0 & 0 & 0
                                                  \end{array}  \right)  \mbox{ and }
 C = \left(\begin{array}{ccc} 0 & 0 & 0\\
                                I & 0 & 0\\
                                0 & I & 0
                                                  \end{array}  \right).
\]                                
Then $A(BA)^2=ABACA=ACABA=(AC)^2A$ and $ABA \neq ACA$. Hence common spectral properties for $AC$ and $BA$ can only followed directly from the above results, but not from the corresponding ones in \cite{ZZ2}.
\end{ex}

\begin{ex} Let $A$ and $B$ be as in Example \ref{Ex1}  and let $C$ be defined on $X\oplus X\oplus X$ by
\[ 
 C = \left(\begin{array}{ccc} 0 & 0 & 0\\
                               P & 0 & 0\\
                                0 & I & 0
                                                  \end{array}  \right).
\]                                
Then $A(BA)^2=ABACA=ACABA=(AC)^2A$ and $ABA \neq ACA$. Thus common spectral properties for $AC$ and $BA$ can only followed directly from the above results, but not from the corresponding ones in \cite{ZZ2}.
\end{ex}


\end{document}